
\documentclass{article}%
\usepackage{amsfonts}
\usepackage{amsmath}
\usepackage{amsthm}
\usepackage{amssymb}
\usepackage{bm}
\usepackage{enumitem}
\usepackage{fullpage}
\usepackage{color}
\usepackage{pstricks}
\usepackage{pstricks-add}
\usepackage{graphicx,overpic,psfrag}
\usepackage{graphicx}
\usepackage{tikz}%
\setcounter{MaxMatrixCols}{30}
\providecommand{\U}[1]{\protect\rule{.1in}{.1in}}
\allowdisplaybreaks
\newtheorem{theorem}{Theorem}
\newtheorem{lemma}[theorem]{Lemma}

\newtheorem{proposition}[theorem]{Proposition}
\newtheorem{definition}[theorem]{Definition}
\newtheorem{notation}[theorem]{Notation}
\newtheorem{example}[theorem]{Example}
\newtheorem{remark}[theorem]{Remark}

\begin{document}

\title{Number of common roots and resultant \\of two tropical univariate polynomials}
\author{Hoon Hong\\Department of Mathematics \\North Carolina State University, USA
\and J. Rafael Sendra\\Grupo \textsc{asynacs}, \\Dpto. de F\'{\i}sica y Matem\'aticas, \\Universidad de Alcal\'a, 28871-Alcal\'a de Henares, Madrid, Spain }
\date{}
\maketitle

\begin{abstract}
It is well known that for two univariate polynomials over complex number field
the number of their common roots is equal to the order of their resultant. In
this paper, we show that this fundamental relationship still holds for the
tropical polynomials under suitable adaptation of the notion of order, if the
roots are simple and non-zero.

\end{abstract}

\noindent\textbf{Keywords.} tropical semifield; tropical resultant; common
roots \vspace*{2mm}

\noindent\textbf{MSC.} 15A15, 15A80, 12K10.

\section{Introduction}


The resultant plays a crucial role in algebra and algebraic geometry
~\cite{syl53, Sal85, Lo83,CLO97,BL2010,DKS2013}. Let $m,n$ be fixed. Let
\begin{align*}
\mathbf{A}  &  =\mathbf{a}_{0}\mathbf{x}^{m}+\mathbf{a}_{1}\mathbf{x}%
^{m-1}+\cdots+\mathbf{a}_{m}\in\mathbb{C}\left[  \mathbf{a,x}\right] \\
\mathbf{B}  &  =\mathbf{b}_{0}\mathbf{x}^{n}+\mathbf{b}_{1}\mathbf{x}%
^{n-1}+\cdots+\mathbf{b}_{n}\ \in\mathbb{C}\left[  \mathbf{b,x}\right]
\end{align*}
Then the \emph{resultant} $R\in\mathbb{C}[\mathbf{a},\mathbf{b}]$ is defined
as the smallest monic polynomial (w.r.t. a given order) such that, for every
$a\in\mathbb{C}^{m+1}$ and $b\in\mathbb{C}^{n+1}$, if the two polynomials
$\mathbf{A}\left(  a,\mathbf{x}\right)  ,\mathbf{B}\left(  b,\mathbf{x}%
\right)  \in\mathbb{C}[\mathbf{x}]$ have a common complex root then
$R(a,b)=0$.\footnote{
It is well-known that the resultant $R$ can be defined  in various other ways: for instance, in terms of Sylvester matrix, Bezout matrix, Barnett matrix, Hankel matrix (see \cite{DG04} for a nice summary). Those definitions are  more useful for computational purposes. However they are  also more  complicated when deducing theoretical or structural properties.  Since the main interest of this paper is not computational but structural, we chose the more structural definition.}
We recall the following two well known fundamental properties of resultants. Let
$a\in\mathbb{C}^{m+1}\ $and $b\in\mathbb{C}^{n+1}$ such that $a_{0},b_{0}%
\neq0.$ Then we have

\begin{enumerate}
\item[P1.] The point $\left(  a,b\right)  $ is a root of $R$ if and only if the
polynomials $\mathbf{A}\left(  a,\mathbf{x}\right)  $ and $\mathbf{B}\left(
b,\mathbf{x}\right)  $ have a common complex root.
(Of course, the `if' part is immediate from the definition and thus the interesting part is the `only if').

\item[P2.] The order of the point $(a,b)$ at $R$ is equal to the number of
common complex roots of the polynomials $\mathbf{A}\left(  a,\mathbf{x}%
\right)  $ and $\mathbf{B}\left(  b,\mathbf{x}\right) .$ (See the appendix)
\end{enumerate}

A natural question arises: \emph{whether/how these properties can be adapted
to polynomials over the tropical semifield.} Recall that the tropical
semifield\/ is the set $\mathbb{R\cup}\left\{  -\infty\right\}  $ where the
addition operation is defined as the usual maximum, the multiplication
operation is defined as the usual addition. As a result, it does not allow
subtraction (due to lack of additive inverse; hence the name semifield). It
has been intensively investigated due to numerous interesting applications
~\cite{Si88, Pi98, PaSt04, HeOlWo06, BoJenSpSt07, GoMi08, SpSt09, ItMiSh09,
But10, StTr13,LitSer14, BrSh14, MacStu15,GS2016}.

Note that, unlike polynomials over $\mathbb{C}$, it is easy to compute the
roots of polynomials over tropical semifield, and thus, counting number of
common roots is also easy. Hence the motivation for asking the above question
is not for finding an efficient algorithm, but for gaining structural
understanding on the relation between roots and the resultant.


There have been several adaptations of resultant over $\mathbb{C}$ to the
tropical semifield ~\cite{MiGr06, DFS07, BoJenSpSt07, Ta08, Od08, JY13}. In
particular, Tabera~\cite{Ta08} and Odagiri~\cite{Od08} showed that the above
property P1\ holds over the tropical semifield, if one redefines the notions
of roots and resultant as follows: (1) a root is redefined as a point where
the graph of the polynomial is not smooth (2) the resultant is redefined as
the tropicalization of resultant over $\mathbb{C}$ (\cite{Ta08}) or as the
permanent of the Sylvester matrix (\cite{Od08}). In this paper, we will follow
the definition of \cite{Ta08}.

The main contribution of this paper is to show that the property P2 also holds
over the tropical semifield, if one puts a slight restriction on $\left(
a,b\right)  $ and if one makes a suitable adaptation of the notion of order,
as follows: (1) we restrict $\left(  a,b\right)  $ such that the polynomials
$\mathbf{A}\left(  a,\mathbf{x}\right)  $ and $\mathbf{B}\left(
b,\mathbf{x}\right)  $ have only simple and non-zero roots. (2) the notion of
the order of a point $p$ at a multivariate polynomial $C$ is replaced by the
new concept of ``order'', which is the $\log_{2}$ of the numbers of terms, say
$t,$ in $C$ such that $t\left(  p\right)  =C(p)$.

The paper is structured as follows. In Section \ref{sec:result}, we state
formally the main result of the paper. In Section \ref{sec:proof}, we provide
a proof. In Section \ref{sec:conclude}, we summarize the main result and
discuss some potential generalizations and associated difficulties. In
Appendix, we include a simple proof of P2 over $\mathbb{C}$, provided by
Laurent Bus\'e.

\section{Main Result}

\label{sec:result}

In this section, we present the main result of this paper. For this, we need
to recall some basic notions on the tropical semi-field and the tropical resultant.
The \emph{tropical semi-field} is the tuple $(\mathbb{T},+,\times,/)$, where $\mathbb{T=R\cup}\left\{  -\infty\right\}  $,
$+$ is the usual maximum, $\times$ is the usual addition, and $/$ is the usual
subtraction.  It is easy to see that the  additive identity (tropical zero) is~$-\infty$ and that the
multiplicative identity (tropical one) is~$0$.

Let $\mathbb{T}\left[  \mathbf{x}\right]  $ be the set of all polynomials in
the indeterminate~$\mathbf{x}$. The polynomial $C\in\mathbb{T}\left[
\mathbf{x}\right]  $ represents a function: $\mathbb{T\rightarrow T}$. We say
that $\alpha\in\mathbb{T}$ is a \emph{root} of $C$ if the graph of $C$ has a
corner over $\alpha.$ The \emph{multiplicity} of $\alpha$ is the change in the
slopes of the graph across $\alpha.$ When the multiplicity of $\alpha$ is one,
we say that $\alpha$ is \emph{simple}.

Finally, we recall the notion of tropical resultant (see \cite{Ta08} for
further details). Let $R\in\mathbb{C}[\mathbf{a},\mathbf{b}]$ be the resultant
w.r.t. the fixed degrees $m,n$. Then the \emph{tropical resultant}%
\textsf{\ }$\mathfrak{R}\in\mathbb{T}[\mathbf{a},\mathbf{b}]$ is defined as
the tropicalization of $R$, that is, the tropical sum of the supports of
$R$.\footnote{A motivation behind this definition comes from the observation
that if one carries out the $\log_{t}$-coordinate transform where $t
\rightarrow\infty$, then $R$ becomes the tropical sum of the support of $R$.}

Now we adapt the notion of the order to the tropical semifield.

\begin{definition}
[Order]Let $C\in\mathbb{T}\left[  \mathbf{z}_{1},\ldots,\mathbf{z}_{l}\right]
$ be a tropical non-zero polynomial. Let $C={t}_{1}+\cdots+ {t}_{r}$
where~${t}_{i}$'s are terms in $\mathbf{z}$ with tropical non-zero
coefficients. Let $p\in\mathbb{T}^{l}.$ Let $E_{C}(p):=\left\{  {t}%
_{i}:C\left(  p\right)  ={t}_{i}\left(  p\right)  \right\}  $. The
\emph{order}\footnote{Over $\mathbb{T}$, the notions of multiplicity and order
are \emph{not} the same, unlike over $\mathbb{C}$, already in the univariate
case.} of~$p$ in $C$, written as $O_{C}\left(  p\right)  $, is defined by
\[
O_{C}\left(  p\right)  := \log_{2} \#E_{C}(p)
\]

\end{definition}

\begin{example}
\label{ex:index-order} Let $C(z)=\mathbf{z}_{1}\mathbf{z}_{2}+2\mathbf{z}%
_{1}+2\in\mathbb{T}[\mathbf{z}]$. The plots in Figure \ref{fig:index-order}
describe the graph of $C$. \begin{figure}[h]
\begin{center}
\includegraphics[width=12cm]{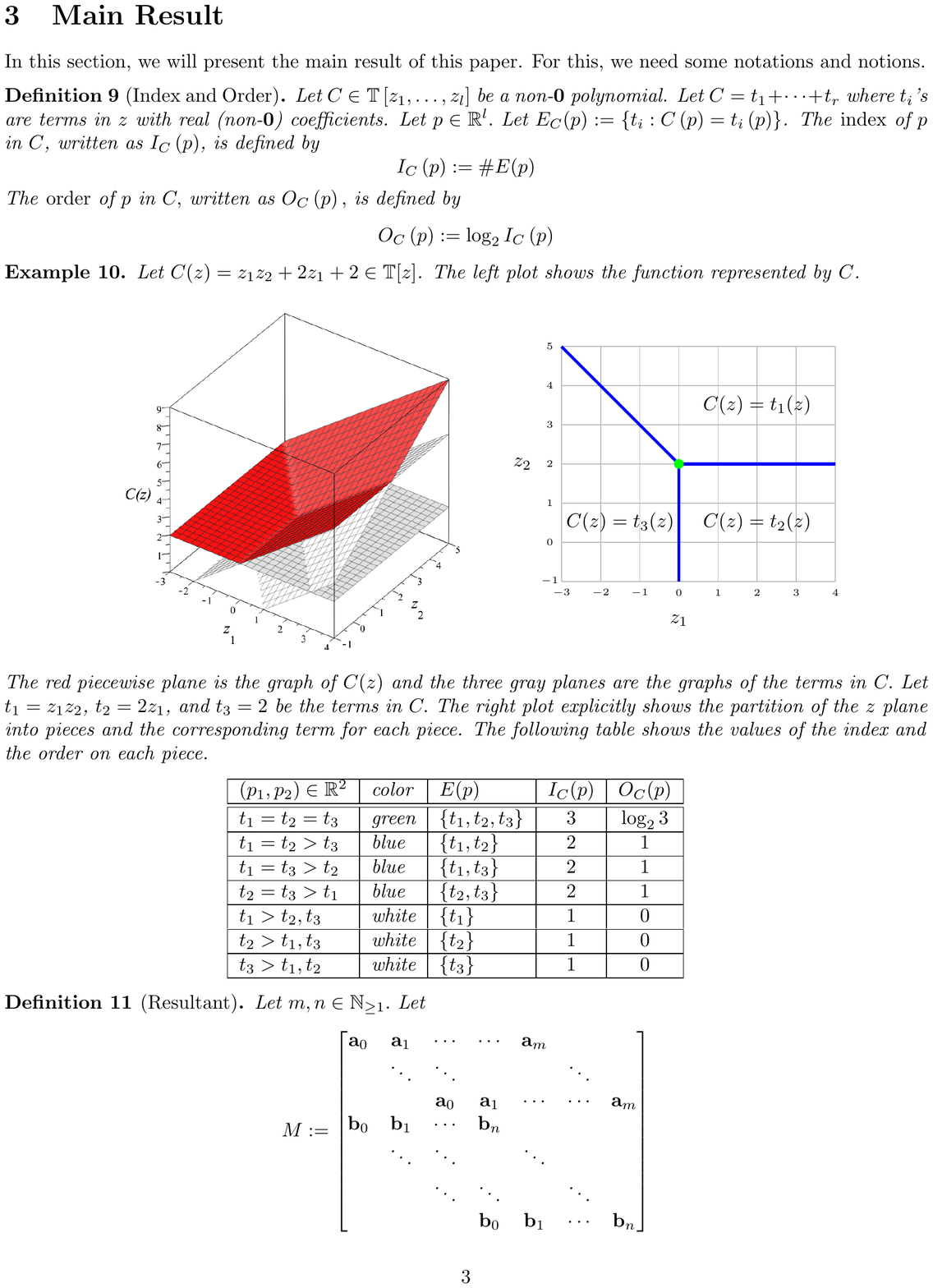}
\par%
\begin{tabular}
[c]{|l|l|l|c|c|}\hline
$(p_{1},p_{2})\in\mathbb{T}^{2}$ & color & $E_{C}(p)$ & $\# E_{C}(p)$ &
$O_{C}(p)$\\\hline\hline
$t_{1}=t_{2}=t_{3}$ & green & $\{t_{1},t_{2},t_{3}\}$ & $3$ & $\log_{2}%
3$\\\hline
$t_{1}=t_{2}>t_{3}$ & blue & $\{t_{1},t_{2}\}$ & $2$ & $1$\\\hline
$t_{1}=t_{3}>t_{2}$ & blue & $\{t_{1},t_{3}\}$ & $2$ & $1$\\\hline
$t_{2}=t_{3}>t_{1}$ & blue & $\{t_{2},t_{3}\}$ & $2$ & $1$\\\hline
$t_{1}>t_{2},t_{3}$ & white & $\{t_{1}\}$ & $1$ & $0$\\\hline
$t_{2}>t_{1},t_{3}$ & white & $\{t_{2}\}$ & $1$ & $0$\\\hline
$t_{3}>t_{1},t_{2}$ & white & $\{t_{3}\}$ & $1$ & $0$\\\hline
\end{tabular}
\ \ \
\end{center}
\caption{Polynomial $C$ in Example \ref{ex:index-order}}%
\label{fig:index-order}%
\end{figure}

\noindent The left plot shows the function represented by~$C$. The red
piecewise plane is the graph of~$C(\mathbf{z})$ and the three gray planes are
the graphs of the terms in~$C.$ Let $t_{1}=\mathbf{z}_{1}\mathbf{z}_{2}$,
$t_{2}=2\mathbf{z}_{1}$, and $t_{3}=2$ be the terms in $C.$ The right plot
explicitly shows the partition of the~$\mathbf{z}$ plane into pieces and the
corresponding term for each piece. The bottom table shows the values of the
order on each piece.
\end{example}

\begin{theorem}
[Main Result]\label{thm:main} Let
\begin{align*}
A  &  =a_{0}\mathbf{x}^{m}+a_{1}\mathbf{x}^{m-1}+\cdots+a_{m}\in
\mathbb{T}\left[  \mathbf{x}\right]  ,\ \ a_{0}\neq-\infty\\
B  &  =b_{0}\mathbf{x}^{n}+b_{1}\mathbf{x}^{n-1}+\cdots+b_{n}\;\;\;\,\in
\mathbb{T}\left[  \mathbf{x}\right]  ,\ \ b_{0}\neq-\infty
\end{align*}
be with simple tropical non-zero roots. Then the following two are equivalent:

\begin{enumerate}
\item $A$ and $B$ have exactly $k$ common roots.

\item $O_{\mathfrak{R}}\left(  a,b\right)  =k.$
\end{enumerate}
\end{theorem}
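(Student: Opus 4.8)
The plan is to prove the single numerical identity $O_{\mathfrak{R}}(a,b) = k$, where $k$ is the number of common roots; since both items of the theorem merely assert a value for this same quantity, the equivalence $1 \Leftrightarrow 2$ is then immediate. Because $O_{\mathfrak{R}}(a,b) = \log_2 \#E_{\mathfrak{R}}(a,b)$, the heart of the matter is the combinatorial claim $\#E_{\mathfrak{R}}(a,b) = 2^k$: each common root should contribute an independent factor of two, and the $\log_2$ in the definition of order is precisely what converts this product into the additive count $k$.

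First I would record the elementary root dictionary. Under the simple-root hypothesis the coefficient sequences $(a_i)$ and $(b_j)$ are strictly concave, the roots of $A$ are the consecutive differences $\alpha_s = a_{m-s+1} - a_{m-s}$ for $s=1,\dots,m$ in increasing order, and similarly $\beta_t = b_{n-t+1} - b_{n-t}$ for $B$; a common root is a coincidence $\alpha_s = \beta_t$, and $k$ counts these. The non-zero hypothesis keeps all roots finite, and simplicity guarantees that at each root value exactly two terms of $A$ (resp. $B$) tie, which is what makes the local doubling clean.

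Next I would translate $\#E_{\mathfrak{R}}(a,b)$ into an assignment count. Writing $R = \det(\mathrm{Syl}(A,B))$, every monomial of $R$ arises from a permutation $\sigma$ of the $(m+n)\times(m+n)$ Sylvester matrix, and the value of such a monomial in the direction $(a,b)$ is exactly the cost $\sum_r w(r,\sigma(r))$ of $\sigma$ in the linear assignment problem whose weights $w$ are the $(a,b)$-values of the Sylvester entries. Thus $\mathfrak{R}(a,b)$ equals the optimal assignment value, and $E_{\mathfrak{R}}(a,b)$ is carved out by the \emph{optimal} assignments. The point to check here is that passing from the permanent to the determinant loses nothing on this top face: that the optimal monomials of $\det(\mathrm{Syl})$ do not cancel and that distinct optimal $\sigma$ yield distinct surviving monomials. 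I would secure this with a generic Puiseux lift $a_i \mapsto t^{a_i},\ b_j \mapsto t^{b_j}$ together with the classical Poisson formula $R = a_0^n \prod_s B(\alpha_s^{\mathrm{cl}})$, noting that for generic leading coefficients the classical roots of $A$ and $B$ stay distinct even when their tropical shadows coincide, so no cancellation occurs at the common roots.

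Finally I would solve and count. Having fixed the optimal value, the task is to describe \emph{all} optimal assignments of the banded Sylvester cost matrix and to show their number is $2^k$. The mechanism is that the optimum is unique away from common roots, while each coincidence $\alpha_s = \beta_t$ opens up exactly one zero-cost alternating modification (a localized cycle that swaps an $a$-row choice against a $b$-row choice, as the case $m=2, n=1$ already exhibits), and these $k$ modifications act on disjoint index ranges and so can be toggled independently, producing precisely $2^k$ optima. Taking $\log_2$ gives $O_{\mathfrak{R}}(a,b) = k$. I expect this last step — the exact characterization of the optimal assignments and, above all, the proof that the $k$ doublings are mutually independent with no extra coincidental optima creeping in — to be the main obstacle; the strict concavity forced by simple roots is the structural input that should make the localization and independence go through.
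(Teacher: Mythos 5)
Your overall skeleton is the same as the paper's: reduce $O_{\mathfrak{R}}(a,b)$ to counting the permutations of the tropical Sylvester matrix that achieve the maximum (an optimal-assignment count), and show that count is $2^{k}$. The two supporting points you flag are handled in the paper by elementary means rather than your proposed Puiseux lift: bi-homogeneity reduces to the monic case; the permutations with a ``zigzag'' (i.e.\ whose restriction to the $a$-rows or to the $b$-rows is not increasing) are shown to be strictly suboptimal by an explicit adjacent swap; and the remaining permutations $S^{\ast}$ are shown to yield pairwise distinct monomials directly from the banded structure, which settles both non-cancellation and the identification $\#E_{\mathfrak{R}}(a,b)=\#\{\pi: M_{\pi}=\max\}$ without leaving the tropical world. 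Your Poisson-formula route could plausibly be made to work for non-cancellation, but note it does not by itself give the \emph{distinctness} of the monomials coming from distinct optimal permutations, which you also need for the equality of the two counts.

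The genuine gap is your final step, and you correctly identify it as the main obstacle: you assert, but do not prove, that the optimum is unique away from common roots and that each coincidence $\alpha_{s}=\beta_{t}$ contributes exactly one independent zero-cost toggle with ``no extra coincidental optima creeping in.'' As written this is a description of the answer, not an argument; nothing in your proposal rules out, say, a longer zero-cost alternating cycle involving several rows. The paper closes exactly this gap with a global characterization that makes the independence automatic: writing $M_{\pi}=\alpha^{p}\beta^{q}$ with explicit exponents $p_{i}=n-(\mu_{i}-i)$, $q_{i}=m-(\nu_{i}-i)$, one shows that $\pi$ is optimal \emph{iff} $\pi$ arranges the multiset $\{\alpha_{1},\dots,\alpha_{m},\beta_{1},\dots,\beta_{n}\}$ in non-increasing order, i.e.\ $\pi$ is a stable merge of the two strictly decreasing root lists (one direction is a rearrangement-inequality computation, the other an improving adjacent swap). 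Since the only ties in the merged list are the $k$ pairs $\alpha_{i_{\ell}}=\beta_{j_{\ell}}$, the number of merges is exactly $2^{k}$, with no further case analysis. If you replace your local-toggle heuristic with this sortedness criterion (or supply an equivalent complete description of the optimal assignments), your plan becomes a correct proof; without it, the count $2^{k}$ is unsupported.
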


\begin{example}
\label{ex:result} We will illustrate the main result on a simple example. Let
\[
A=a_{0}\mathbf{x}^{3}+a_{1}\mathbf{x}^{2}+a_{2}\mathbf{x}+a_{3}\in
\mathbb{T}[\mathbf{x}]
\]
be a monic polynomial with the roots $\alpha_{1}>\alpha_{2}>\alpha_{3}%
\neq-\infty$ and let
\[
B=b_{0}\mathbf{x}^{2}+b_{1}\mathbf{x}+b_{2}\in\mathbb{T}[\mathbf{x}]
\]
be a monic polynomial with the roots $\beta_{1}>\beta_{2}\neq-\infty$. Assume
that
\[%
\begin{array}
[c]{c}%
\alpha_{1}\\
\parallel\\
\beta_{1}%
\end{array}
>\alpha_{2}>%
\begin{array}
[c]{c}%
\alpha_{3}\\
\parallel\\
\beta_{2}%
\end{array}
\neq-\infty
\]
Now we will verify the main result on this example. Note

\begin{enumerate}
\item $A$ and $B$ have exactly $2$ common roots, namely $\alpha_{1}=\beta_{1}$
and $\alpha_{3}=\beta_{2}$.

\item Table \ref{table-2} shows the value of each term in $\mathfrak{R}$.
\begin{table}[h]
\begin{center}%
\begin{tabular}
[c]{|l||l|l|}\hline
Term & Value in roots & Value simplified\\\hline\hline
$\mathbf{a}_{0}^{2}\mathbf{b}_{2}^{3}$ & $(0 )^{2}(\beta_{1}\beta_{2})^{3}$ &
$\alpha_{1}^{3}\alpha_{3}^{3}$\\\hline
$\mathbf{a}_{0}\mathbf{a}_{1}\mathbf{b}_{1}\mathbf{b}_{2}^{2}$ & $(0
)(\alpha_{1})(\beta_{1})(\beta_{1}\beta_{2})^{2}$ & $\alpha_{1}^{4}\alpha
_{3}^{2}$\\\hline
$\mathbf{a}_{0}\mathbf{a}_{2}\mathbf{b}_{0}\mathbf{b}_{2}^{2}$ & $(0
)(\alpha_{1}\alpha_{2})(0 )(\beta_{1}\beta_{2})^{2}$ & $\alpha_{1}^{3}%
\alpha_{2}\alpha_{3}^{2}$\\\hline
$\mathbf{a}_{0}\mathbf{a}_{2}\mathbf{b}_{1}^{2}\mathbf{b}_{2}$ & $(0
)(\alpha_{1}\alpha_{2})(\beta_{1})^{2}(\beta_{1}\beta_{2})$ & ${\alpha_{1}%
^{4}\alpha_{2}\alpha_{3}}$\\\hline
$\mathbf{a}_{0}\mathbf{a}_{3}\mathbf{b}_{0}\mathbf{b}_{1}\mathbf{b}_{2}$ & $(0
)(\alpha_{1}\alpha_{2}\alpha_{3})(0 )(\beta_{1})(\beta_{1}\beta_{2})$ &
$\alpha_{1}^{3}\alpha_{2}\alpha_{3}^{2}$\\\hline
$\mathbf{a}_{0}\mathbf{a}_{3}\mathbf{b}_{1}^{3}$ & $(0 )(\alpha_{1}\alpha
_{2}\alpha_{3})(\beta_{1})^{3}$ & ${\alpha_{1}^{4}\alpha_{2}\alpha_{3}}%
$\\\hline
$\mathbf{a}_{1}^{2}\mathbf{b}_{0}\mathbf{b}_{2}^{2}$ & $(\alpha_{1})^{2}(0
)(\beta_{1}\beta_{2})^{2}$ & $\alpha_{1}^{4}\alpha_{3}^{2}$\\\hline
$\mathbf{a}_{1}\mathbf{a}_{2}\mathbf{b}_{0}\mathbf{b}_{1}\mathbf{b}_{2}$ &
$(\alpha_{1})(\alpha_{1}\alpha_{2})(0 )(\beta_{1})(\beta_{1}\beta_{2})$ &
${\alpha_{1}^{4}\alpha_{2}\alpha_{3}}$\\\hline
$\mathbf{a}_{1}\mathbf{a}_{3}\mathbf{b}_{0}\mathbf{b}_{1}^{2}$ & $(\alpha
_{1})(\alpha_{1}\alpha_{2}\alpha_{3})(0 )(\beta_{1})^{2}$ & ${\alpha_{1}%
^{4}\alpha_{2}\alpha_{3}}$\\\hline
$\mathbf{a}_{1}\mathbf{a}_{3}\mathbf{b}_{0}^{2}\mathbf{b}_{2}$ & $(\alpha
_{1})(\alpha_{1}\alpha_{2}\alpha_{3})(0 )^{2}(\beta_{1}\beta_{2})$ &
$\alpha_{1}^{3}\alpha_{2}\alpha_{3}^{2}$\\\hline
$\mathbf{a}_{2}^{2}\mathbf{b}_{0}^{2}\mathbf{b}_{2}$ & $(\alpha_{1}\alpha
_{2})^{2}(0 )^{2}(\beta_{1}\beta_{2})$ & $\alpha_{1}^{3}\alpha_{2}^{2}%
\alpha_{3}$\\\hline
$\mathbf{a}_{2}\mathbf{a}_{3}\mathbf{b}_{0}^{2}\mathbf{b}_{1}$ & $(\alpha
_{1}\alpha_{2})(\alpha_{1}\alpha_{2}\alpha_{3})(0 )^{2}(\beta_{1})$ &
$\alpha_{1}^{3}\alpha_{2}^{2}\alpha_{3}$\\\hline
$\mathbf{a}_{3}^{2}\mathbf{b}_{0}^{3}$ & $(\alpha_{1}\alpha_{2}\alpha_{3}%
)^{2}(0 )^{3}$ & $\alpha_{1}^{2}\alpha_{2}^{2}\alpha_{3}^{2}$\\\hline
\end{tabular}
\end{center}
\caption{Value of each term of $\mathfrak{R}$, in Example \ref{ex:result}, at
the roots}%
\label{table-2}%
\end{table}

In the second column, we used the obvious relation
\[%
\begin{array}
[c]{cccc}%
a_{0}=0 & a_{1}=\alpha_{1} & a_{2}=\alpha_{1}\alpha_{2} & a_{3}=\alpha
_{1}\alpha_{2}\alpha_{3}\\
b_{0}=0 & b_{1}=\beta_{1} & b_{2}=\beta_{1}\beta_{2} &
\end{array}
\]
In the last column, we simplified the value using the fact that $\alpha
_{1}=\beta_{1}\ $and $\alpha_{3}=\beta_{2},$ for the sake of easier comparison
among the values. One can straightforwardly verify that $\alpha_{1}^{4}%
\alpha_{2}\alpha_{3}$ is the maximum among the values. Thus $\mathfrak{R}%
(a,b)=\alpha_{1}^{4}\alpha_{2}\alpha_{3}$.
Hence the corresponding terms are given by%
\[
E_{\mathfrak{R}}(a,b)=\{\mathbf{a}_{0}\mathbf{a}_{2}\mathbf{b}_{1}%
^{2}\mathbf{b}_{2}\mathbf{,\;\mathbf{a}_{0}\mathbf{a}_{3}\mathbf{b}_{1}%
^{3},\;\mathbf{a}_{1}\mathbf{a}_{2}\mathbf{b}_{0}\mathbf{b}_{1}\mathbf{b}%
_{2},\;a}_{1}\mathbf{a}_{3}\mathbf{b}_{0}\mathbf{b}_{1}^{2}\}
\]
Thus $\# E_{\mathfrak{R}}\left(  a,b\right)  =2^{2}$ and $O_{\mathfrak{R}%
}\left(  a,b\right)  =2$.
\end{enumerate}

\noindent We have verified the main result on this example.
\end{example}

\section{Proof}
\label{sec:proof}
{
In this section, we provide a proof the main result (Theorem~\ref{thm:main}).
One naturally wonders whether a proof for the main result
can be obtained by suitably `translating'' a proof for the field case (such as the one  given in Appendix).
We have tried the approach, without success. Thus we developed a completely different proof strategy.

Before plunging into a technically detailed proof, we first provide an
informal overview of the   proof strategy.
Note that the tropical resultant $\mathfrak{R}$ is defined as the tropicalization of
(i.e., the tropical sum of all the terms appearing in) the resultant over $\mathbb{C}$.
Recall that the resultant over $\mathbb{C}$ is same as  the determinant of the Sylvester matrix.
Hence the tropical resultant $\mathfrak{R}$ is
the tropicalization of the determinant of the Sylvester matrix. Recalling that
terms in the determinant corresponds to permutations of column indices, we
observe that each term in $\mathfrak{R}$ comes from one or more permutations
of $(1,\ldots,n+m)$. Thus we focus our attention to the permutations. Let $S$
be the set of all the permutations. Then, the main steps of the proof consist
of the followings.

\begin{enumerate}
\item Lemma \ref{lem:no_zigzag}: We \textquotedblleft prune\textquotedblright%
\ the set $S$, obtaining $S^{\ast}$, by removing all the permutations that
never yields $\mathfrak{R}(a,b)$, no matter what $a$ and $b$ are.

\item Lemma \ref{lem:unique}: We show that each permutation in $S^{\ast}$
provides a different term in $\mathfrak{R}(a,b)$.

\item Lemma \ref{lem:RPPstar}: Using the above two lemmas,
we show that the following three elements of $\mathbb{T}$
are the same: $\mathfrak{R}(a,b)$,
$P$ and~$P^{\ast}$,
where the last two are obtained
from the Sylvester matrix, by considering
all the permutations in $S$ and $S^{\ast}$, respectively.

\item Lemma \ref{lem:sort}: We \textquotedblleft
characterize\textquotedblright\ the permutations in $S^{\ast}$ that yield
$\mathfrak{R}(a,b)$, in terms of the ordering among the roots of $A$ and $B$.

\item Lemma \ref{lem:ncr<->nmp}: We show that the number of permutation in the
previous step is exactly $2^{k}$, where $k$ is the number of common roots.

\item The main result is immediate from the above lemmas.
\end{enumerate}

\bigskip

\noindent Now we plunge into the details of the proof. From now on, we fix
$a=(a_{0},\ldots,a_{m})\in\mathbb{T}^{m+1}$ and $b=(b_{0},\ldots,b_{n}%
)\in\mathbb{T}^{n+1}$ such that the polynomials
\begin{align*}
A  &  =a_{0}\mathbf{x}^{m}+a_{1}\mathbf{x}^{m-1}+\cdots+a_{m}\in
\mathbb{T}\left[  \mathbf{x}\right]  , \,\,a_{0} \neq-\infty\\
B  &  =b_{0}\mathbf{x}^{n}+b_{1}\mathbf{x}^{n-1}+\cdots+b_{n}\;\;\;\in
\mathbb{T}\left[  \mathbf{x}\right]  ,\,\,b_{0} \neq-\infty
\end{align*}
are with simple tropical non-zero roots.

The next two lemmas will be used to reduced the proof to the monic polynomial case.

\begin{lemma}
\label{lem:monic1} Let $C=c_{0}\mathbf{x}^{d}+c_{1}\mathbf{x}^{d-1}%
+\cdots+c_{d}\in\mathbb{T}\left[  \mathbf{x}\right]  $ where $c_{0}\neq
-\infty$. The roots of $C$ and $\frac{1}{c_{0}}C$ are the same.
\end{lemma}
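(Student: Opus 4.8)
\textbf{Proof plan for Lemma~\ref{lem:monic1}.}
The plan is to reduce the statement to the definition of a tropical root as a point where the graph of the polynomial function has a corner. Writing everything out tropically, the multiplication by $\frac{1}{c_0}$ is (in ordinary arithmetic) the subtraction of the constant $c_0$ from every coefficient, so the function represented by $\frac{1}{c_0}C$ is
\[
\left(\tfrac{1}{c_0}C\right)(\alpha)=\max_{0\le i\le d}\bigl((c_i-c_0)+(d-i)\alpha\bigr)=\Bigl(\max_{0\le i\le d}\bigl(c_i+(d-i)\alpha\bigr)\Bigr)-c_0=C(\alpha)-c_0 .
\]
Thus the graph of $\frac{1}{c_0}C$ is simply the graph of $C$ shifted vertically by the constant $-c_0$. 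Here the hypothesis $c_0\neq-\infty$ is exactly what guarantees that $\frac{1}{c_0}$ is a well-defined element of $\mathbb{T}$ (i.e.\ the constant $c_0$ is a genuine real number, so the subtraction makes sense).

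First I would make precise that the evaluation of a tropical polynomial at a point is the tropical sum (maximum) of the evaluations of its terms, and that multiplying a polynomial by the scalar $\frac{1}{c_0}$ distributes over this tropical sum; this is just the distributivity of the real number $-c_0$ over $\max$, namely $\max_i(x_i)-c=\max_i(x_i-c)$. This identity yields the displayed relation $\left(\frac{1}{c_0}C\right)(\alpha)=C(\alpha)-c_0$ for every $\alpha\in\mathbb{T}$.

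Next I would invoke the definition of a root and of multiplicity directly. Since the two graphs differ by a global vertical translation, they have a corner at exactly the same abscissae, and the change in slope across any such abscissa is identical for both graphs. By the definitions given in the text, $\alpha$ is a root of $\frac{1}{c_0}C$ if and only if it is a root of $C$, with the same multiplicity (and in particular the same set of simple roots). This establishes that the roots of $C$ and of $\frac{1}{c_0}C$ coincide.

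I do not expect a genuine obstacle here, since the result is essentially the observation that vertical translation of the graph preserves corners and slopes; the only point requiring a little care is the handling of the $-\infty$ coefficients. If some $c_i=-\infty$, the corresponding term contributes $-\infty$ at every finite $\alpha$ and the subtraction $c_i-c_0=-\infty$ is still well defined in $\mathbb{T}$ (as $-\infty$), so such terms are harmless and the argument is unaffected. The essential hypothesis is only that the leading coefficient $c_0$ be finite, which is assumed.
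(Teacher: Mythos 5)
Your proof is correct and fills in exactly the argument the paper leaves implicit (its proof is simply ``Obvious''): tropical division by $c_0$ subtracts the real constant $c_0$ from every coefficient, so the graph is a vertical translate and its corners and slope changes are unchanged. No issues.
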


\begin{proof}
Obvious.
\end{proof}

\begin{lemma}
\label{lem:monic} $O_{\mathfrak{R}}\left(  a,b\right)  = O_{\mathfrak{R}%
}\left(  \frac{1}{a_{0}}a,\frac{1}{b_{0}}b\right)  .$
\end{lemma}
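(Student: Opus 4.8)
The plan is to show that replacing $(a,b)$ by $(\frac{1}{a_0}a,\frac{1}{b_0}b)$ merely shifts the value of \emph{every} term of $\mathfrak{R}$ by one and the same finite real constant. Once this is established, the subset of terms attaining the maximum cannot change, and hence neither can its cardinality nor the order $O_{\mathfrak{R}}$.

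First I would record the structural fact that makes this work: the classical resultant $R$, being the determinant of the Sylvester matrix, is bihomogeneous of bidegree $(n,m)$ in the coefficient blocks $\mathbf{a}$ and $\mathbf{b}$. Indeed, of the $m+n$ rows of the Sylvester matrix, exactly $n$ are built from the coefficients $\mathbf{a}_0,\ldots,\mathbf{a}_m$ of $A$ and exactly $m$ from the coefficients $\mathbf{b}_0,\ldots,\mathbf{b}_n$ of $B$; since each term of the determinant picks exactly one entry from each row, every monomial of $R$ has total degree $n$ in $\mathbf{a}$ and total degree $m$ in $\mathbf{b}$. Because $\mathfrak{R}$ is by definition the tropical sum of the support of $R$, each term $t_i$ of $\mathfrak{R}$ is a tropical monomial $\mathbf{a}^{\alpha}\mathbf{b}^{\beta}$ with $|\alpha|=n$ and $|\beta|=m$.

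Next I would carry out the substitution, recalling that tropically $\frac{1}{a_0}a_j = a_j - a_0$ and $\frac{1}{b_0}b_j = b_j - b_0$ in ordinary arithmetic. Writing $t_i = \mathbf{a}^{\alpha}\mathbf{b}^{\beta}$, a direct computation gives
\begin{align*}
t_i\!\left(\tfrac{1}{a_0}a,\tfrac{1}{b_0}b\right)
&= \sum_{j} \alpha_j (a_j - a_0) + \sum_{j} \beta_j (b_j - b_0) \\
&= t_i(a,b) - \Big(\textstyle\sum_j \alpha_j\Big) a_0 - \Big(\textstyle\sum_j \beta_j\Big) b_0
= t_i(a,b) - (n\,a_0 + m\,b_0).
\end{align*}
The crucial point is that the shift $n\,a_0 + m\,b_0$ does not depend on $i$; it is the \emph{same} constant for all terms, precisely because of bihomogeneity. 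Moreover, since $a_0,b_0 \neq -\infty$ by hypothesis, this constant is a finite real number, so the shift is by an honest element of $\mathbb{R}$ and cannot collapse any term value to $-\infty$.

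Finally, shifting every term value by a common real constant preserves all order relations among them, so the maximizing subset is identical in the two cases: $E_{\mathfrak{R}}(\frac{1}{a_0}a,\frac{1}{b_0}b) = E_{\mathfrak{R}}(a,b)$. Taking $\log_2$ of the two (equal) cardinalities then yields $O_{\mathfrak{R}}(\frac{1}{a_0}a,\frac{1}{b_0}b) = O_{\mathfrak{R}}(a,b)$, as claimed. I expect no genuine obstacle here; the only step needing care is the bihomogeneity claim, which is standard for the Sylvester resultant but should be stated explicitly, since it is exactly what forces the per-term shift to be uniform — were the terms of differing bidegree, the maximizing set could well change under scaling.
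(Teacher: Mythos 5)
Your proof is correct and follows essentially the same route as the paper's: both rest on the bihomogeneity of $\mathfrak{R}$ of bidegree $(n,m)$ in $(\mathbf{a},\mathbf{b})$, which makes every term shift by the common constant $n\,a_0+m\,b_0$ under the substitution, leaving $E_{\mathfrak{R}}$ and hence the order unchanged. You simply spell out the justification of bihomogeneity (via the row structure of the Sylvester matrix) that the paper leaves as an observation.
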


\begin{proof}
Let $\mathfrak{R}$ be expressed as $\mathfrak{R}=t_{1}+\cdots+t_{\rho}$ where
$t_{i}$ are terms in $\mathbf{a},\mathbf{b}$ with tropical non-zero
coefficients. We observe that $\mathfrak{R}$ is bi-homogeneous of degrees $n$
and $m$ in the variables $\mathbf{a}$ and $\mathbf{b}$, respectively. Thus for
each term $t_{i}$ we have
\[
a_{0}^{m}b_{0}^{n}\;\;t_{i}\left(  \frac{1}{a_{0}}a,\frac{1}{b_{0}}b\right)
\;\;=\;\; t_{i}(a,b).
\]
Now, the lemma follows immediately from the definition of order.
\end{proof}

Therefore, taking into account of Lemma \ref{lem:monic1} and Lemma
\ref{lem:monic}, we can restrict the proof of the main result to the monic
case, without losing generality. Thus, in the following, we assume that $A, B$
are monic; that is $a_{0}=0 =b_{0}$. In addition, let $\alpha_{1}%
>\cdots>\alpha_{m}$ be the roots of $A$ and $\beta_{1}>\cdots>\beta_{n}$ be
the roots of $B.$ We obviously have
\begin{align*}
a_{i}  &  =\alpha_{1}\cdots\alpha_{i}\\
b_{i}  &  =\beta_{1}\cdots\beta_{i}%
\end{align*}
Note that, since $\alpha_{i}\neq-\infty$ and $\beta_{j}\neq-\infty$, we see
that $a_{i}\neq-\infty$ and $b_{j}\neq-\infty$. We will set $a_{i}=-\infty$ if
$i>m$ or $i<0$ and set $b_{i}=-\infty$ if $i>n$ or $i<0.$ \ Let%
\begin{align*}
&  \left[  a_{j-i}\right]  =%
\begin{bmatrix}
a_{0} & a_{1} & \cdots & \cdots & a_{m} &  & \\
& \ddots & \ddots &  &  & \ddots & \\
&  & a_{0} & a_{1} & \cdots & \cdots & a_{m}%
\end{bmatrix}
\in\mathbb{T}^{n\times\left(  n+m\right)  }\\
&  \left[  b_{j-i}\right]  =%
\begin{bmatrix}
b_{0} & b_{1} & \cdots & b_{n} &  &  & \\
& \ddots & \ddots &  & \ddots &  & \\
&  & \ddots & \ddots &  & \ddots & \\
&  &  & b_{0} & b_{1} & \cdots & b_{n}%
\end{bmatrix}
\ \ \ \in\mathbb{T}^{m\times\left(  n+m\right)  }\\
M  &  :=\left[
\begin{array}
[c]{c}%
\left[  a_{j-i}\right] \\
\left[  b_{j-i}\right]
\end{array}
\right]  \in\mathbb{T}^{\left(  n+m\right)  \times\left(  n+m\right)  }%
\end{align*}

\begin{notation}
Let $S$ stand for the set of all permutations of $\left(  1,\ldots,n+m\right)
$. Furthermore, for $\pi\in S$, let
\[
M_{\pi}:=M_{1,\pi_{1}}\cdots M_{n+m,\pi_{n+m}}.
\]
Moreover, let $S^{\ast}$ stand for the set of all $\left(  \nu_{1},\ldots
,\nu_{n},\mu_{1},\ldots,\mu_{m}\right)  \in S$ such that $\nu_{1}<\cdots
<\nu_{n}$ and $\mu_{1}<\cdots<\mu_{m}.$
\end{notation}

\begin{remark}
From the structure of $M$ it follows that, if $\pi=\left(  \nu_{1},\ldots
,\nu_{n},\mu_{1},\ldots,\mu_{m}\right)  \in S,$ then
\[
M_{\pi}=a_{\nu_{1}-1}\cdots a_{\nu_{n}-n}\ b_{\mu_{1}-1}\cdots b_{\mu_{m}-m}%
\]

\end{remark}

In the next example we see that $S^{*}$ is much smaller that $S$. Later, we
will see that the relevant information for our problem lies in $S^{*}$

\begin{example}
Let $m=3$ and $n=2$. Then
\begin{align*}
S^{\ast}=\{  &
\ (1,2,\ \ 3,4,5),\ \ \ (1,3,\ \ 2,4,5),\ \ \ (1,4,\ \ 2,3,5),\ \ \ (1,5,\ \ 2,3,4),\ \ \\
&  \ (2,3,\ \ 1,4,5),\ \ \ (2,4,\ \ 1,3,5),\ \ \ (2,5,\ \ 1,3,4),\ \ \\
&  \ (3,4,\ \ 1,2,5),\ \ \ (3,5,\ \ 1,2,4),\ \ \\
&  \ (4,5,\ \ 1,2,3)\ \}
\end{align*}
\newline Note
\[
\#S^{\ast}=\frac{(n+m)!}{n!m!}=\frac{(2+3)!}{2!3!}=10, \,\,\, \#S=120.
\]

\end{example}

Next, we introduce the following two elements
\[
P=\sum_{\pi\in S}M_{\pi},\,\,\text{and}\,\,P^{*}=\sum_{\pi\in S^{*}}M_{\pi}.
\]
The next lemmas will be used to conclude that $P=P^{*}=\mathfrak{R}(a,b)$ (see
Lemma~\ref{lem:RPPstar}).

\begin{lemma}
\label{lemma:DversusP} $\mathfrak{R}(a,b)\leq P. $
\end{lemma}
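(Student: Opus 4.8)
The plan is to trace each term of $\mathfrak{R}$ back to a single permutation in $S$ and to show that its value at $(a,b)$ equals some $M_{\pi}$; since $P$ is the tropical sum (i.e.\ maximum) of all the $M_{\pi}$, the desired inequality then follows at once.

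First I would recall the two structural facts already highlighted in the proof overview: the classical resultant $R\in\mathbb{C}[\mathbf{a},\mathbf{b}]$ equals $\det(\mathrm{Syl})$, where $\mathrm{Syl}$ is the Sylvester matrix whose entries are the coefficients occupying the same positions as in $M$ (with a genuine $0$ wherever $M$ carries $-\infty$); and, by definition, $\mathfrak{R}$ is the tropical sum of the monomials that appear in $R$ with nonzero coefficient. Writing $\mathfrak{R}=t_{1}+\cdots+t_{\rho}$, each $t_{s}$ is, tropically, a monomial $\mathbf{a}^{\alpha}\mathbf{b}^{\beta}$ occurring in the Leibniz expansion $R=\sum_{\sigma}\mathrm{sgn}(\sigma)\prod_{i}\mathrm{Syl}_{i,\sigma_{i}}$.

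The key step is the observation that every surviving monomial of $R$ is produced by at least one permutation. Indeed, if no $\sigma\in S$ gave $\prod_{i}\mathrm{Syl}_{i,\sigma_{i}}=\mathbf{a}^{\alpha}\mathbf{b}^{\beta}$ up to the scalar, then $\mathbf{a}^{\alpha}\mathbf{b}^{\beta}$ could not occur in the determinant at all, contradicting that $t_{s}$ is a term of $\mathfrak{R}$. Fixing one such $\sigma=\pi$ and reading it in the block form $(\nu_{1},\ldots,\nu_{n},\mu_{1},\ldots,\mu_{m})$, the formula of the Remark gives $M_{\pi}=a_{\nu_{1}-1}\cdots a_{\nu_{n}-n}\,b_{\mu_{1}-1}\cdots b_{\mu_{m}-m}$, which is precisely the tropical evaluation of the monomial $\mathbf{a}^{\alpha}\mathbf{b}^{\beta}$ at $(a,b)$; that is, $t_{s}(a,b)=M_{\pi}$. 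Here I use that tropicalization discards both the sign $\mathrm{sgn}(\sigma)$ and the integer coefficient, retaining only the support.

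Finally, since $M_{\pi}\le\sum_{\pi'\in S}M_{\pi'}=P$, we obtain $t_{s}(a,b)\le P$ for every term $t_{s}$, whence $\mathfrak{R}(a,b)=\max_{s}t_{s}(a,b)\le P$. I do not expect a serious obstacle in this direction: the only point requiring care is the bookkeeping that matches the support of a determinant monomial to the column indices of a permutation and then to the Remark's formula for $M_{\pi}$. The genuinely hard direction---that no cancellation in $R$ destroys the dominant term, i.e.\ that $P\le\mathfrak{R}(a,b)$---is exactly what the subsequent pruning of $S$ to $S^{\ast}$ is designed to handle, and is not needed for the present lemma.
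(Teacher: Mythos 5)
Your argument is correct and is exactly the reasoning the paper compresses into its one-line proof (``Obvious from the notion of tropicalization''): every monomial in the support of $R=\det(\mathrm{Syl})$ arises from at least one permutation, so each term of $\mathfrak{R}$ evaluates at $(a,b)$ to some $M_{\pi}$, which is bounded by $P$. You also correctly identify that the reverse inequality is the nontrivial direction handled by the later lemmas.
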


\begin{proof}
Obvious from the notion of tropicalization.
\end{proof}

\begin{lemma}
[Odagiri 2008, \cite{Od08}]\label{lem:zigzag}Let $\pi=\left(  \nu_{1}%
,\ldots,\nu_{n},\mu_{1},\ldots,\mu_{m}\right)  \in S$ be such that $M_{\pi
}\neq-\infty.$ Then we have the followings.

\begin{enumerate}
\item Suppose that $\nu_{k}>\nu_{k+1}$ for some $k$. Let $\pi^{\prime
}:=\left(  \nu_{1},\ldots,\nu_{k+1},\nu_{k},\ldots,\nu_{n},\mu_{1},\ldots
,\mu_{m}\right)  ,$ that is, obtained from~$\pi$ by swapping $\nu_{k}$ and
$\nu_{k+1}.$ Then $M_{\pi^{\prime}}>M_{\pi}.$

\item Suppose that $\mu_{k}>\mu_{k+1}$ for some $k$. Let $\pi^{\prime
}:=\left(  \nu_{1},\ldots,\nu_{n},\mu_{1},\ldots,\mu_{k+1},\mu_{k},\ldots
,\mu_{m}\right)  ,$ that is, obtained from~$\pi$ by swapping $\mu_{k}$ and
$\mu_{k+1}.$ Then $M_{\pi^{\prime}}>M_{\pi}.$
\end{enumerate}
\end{lemma}

\begin{example}
Let $m=3$ and $n=3.$ Let
\[
\pi=(1,4,3,2,5)\ \ \ \ \text{and}\ \ \ \ \ \pi^{\prime}=(1,4,2,3,5)
\]
They represent the following choices of elements (\textquotedblleft
path\textquotedblright) encircled
\[
\pi:%
\begin{bmatrix}
\tikz[baseline=(X.base)] \node (X) [draw, shape=circle, inner sep=0] {\strut
$a_0$}; & a_{1} & a_{2} & a_{3} & \\
& a_{0} & a_{1} & \tikz[baseline=(X.base)] \node (X) [draw, shape=circle,
inner sep=0] {\strut$a_2$}; & a_{3}\\
b_{0} & b_{1} & \tikz[baseline=(X.base)] \node (X) [draw, shape=circle, inner
sep=0] {\strut$b_2$}; &  & \\
& \tikz[baseline=(X.base)] \node (X) [draw, shape=circle, inner sep=0]
{\strut$b_0$}; & b_{1} & b_{2} & \\
&  & b_{0} & b_{1} & \tikz[baseline=(X.base)] \node (X) [draw, shape=circle,
inner sep=0] {\strut$b_2$};
\end{bmatrix}
\ \ \ \ \ \,\text{and}\ \ \ \ \ \pi^{\prime}:%
\begin{bmatrix}
\tikz[baseline=(X.base)] \node (X) [draw, shape=circle, inner sep=0] {\strut
$a_0$}; & a_{1} & a_{2} & a_{3} & \\
& a_{0} & a_{1} & \tikz[baseline=(X.base)] \node (X) [draw, shape=circle,
inner sep=0] {\strut$a_2$}; & a_{3}\\
b_{0} & \tikz[baseline=(X.base)] \node (X) [draw, shape=circle, inner sep=0]
{\strut$b_1$}; & b_{2} &  & \\
& b_{0} & \tikz[baseline=(X.base)] \node (X) [draw, shape=circle, inner sep=0]
{\strut$b_1$}; & b_{2} & \\
&  & b_{0} & b_{1} & \tikz[baseline=(X.base)] \node (X) [draw, shape=circle,
inner sep=0] {\strut$b_2$};
\end{bmatrix}
\]
Note
\[
\frac{M_{\pi^{\prime}}}{M_{\pi}}=\frac{a_{0}a_{2}b_{1}b_{1}b_{2}}{a_{0}%
a_{2}b_{2}b_{0}b_{2}}=\frac{b_{1}b_{1}}{b_{2}b_{0}}=\frac{\left(  \beta
_{1}\right)  \left(  \beta_{1}\right)  }{\left(  \beta_{1}\beta_{2}\right)
\left(  0 \right)  }=\frac{\beta_{1}}{\beta_{2}}>0
\]
Thus%
\[
M_{\pi^{\prime}}>M_{\pi}%
\]
verifying the lemma on $\pi$ and $\pi^{\prime}$. Observe that $\pi$ has a
\textquotedblleft zigzag\textquotedblright\ in the bottom part, while
$\pi^{\prime}$ does not have a zigzag. The lemma says that a zigzag makes the
value of a path smaller.
\end{example}

\begin{proof}
[Proof of Lemma \ref{lem:zigzag}]The proof was given in \cite{Od08}. However
for the sake of reader's convenience and the notational consistency, we
provide a complete proof here. We will show the proof of the claim $1$ only.
The proof for the claim~$2$ is essentially the same. Note%
\begin{align*}
\frac{M_{\pi^{\prime}}}{M_{\pi}}  &  =\frac{a_{\nu_{1}-1}\cdots a_{\nu
_{k+1}-k}\ a_{\nu_{k}-\left(  k+1\right)  }\cdots a_{\nu_{n}-n}\ b_{\mu_{1}%
-1}\cdots b_{\mu_{m}-m}}{a_{\nu_{1}-1}\cdots a_{\nu_{k}-k}\ a_{\nu
_{k+1}-\left(  k+1\right)  }\cdots a_{\nu_{n}-n}\ b_{\mu_{1}-1}\cdots
b_{\mu_{m}-m}}\\
&  =\frac{a_{\nu_{k+1}-k}\ \ a_{\nu_{k}-\left(  k+1\right)  }}{a_{\nu_{k}%
-k}\ \ a_{\nu_{k+1}-\left(  k+1\right)  }}\\
&  =\frac{a_{\nu_{k+1}-\left(  k+1\right)  }\ \alpha_{\nu_{k+1}-k}%
\ \ a_{\nu_{k}-\left(  k+1\right)  }}{a_{\nu_{k}-\left(  k+1\right)  }%
\ \alpha_{\nu_{k}-k}\ \ a_{\nu_{k+1}-\left(  k+1\right)  }}\\
&  =\frac{\alpha_{\nu_{k+1}-k}}{\alpha_{\nu_{k}-k}}\\
&  >0
\end{align*}
Thus $M_{\pi^{\prime}}>M_{\pi}.$
\end{proof}

\begin{lemma}
\label{lem:no_zigzag}If $\pi\in S\setminus S^{\ast},$ then $P>M_{\pi}.$
\end{lemma}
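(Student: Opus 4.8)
We must show that for any permutation $\pi \in S \setminus S^*$, we have $P > M_\pi$, where $P = \sum_{\sigma \in S} M_\sigma$ is the tropical sum (maximum) over all permutations. Since $S^*$ consists exactly of those permutations whose first block $(\nu_1,\ldots,\nu_n)$ and second block $(\mu_1,\ldots,\mu_m)$ are each strictly increasing, belonging to $S \setminus S^*$ means at least one of these two blocks fails to be increasing.

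Let me look at Lemma zigzag (Odagiri). It says: if you have a "zigzag" (an out-of-order adjacent pair) within the top block or bottom block, swapping that adjacent pair strictly increases $M_\pi$. So the strategy should be: starting from any $\pi \in S \setminus S^*$, repeatedly apply swaps to sort both blocks, strictly increasing $M$ at each step, arriving at some $\pi^* \in S^*$ with $M_{\pi^*} > M_\pi$.

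But wait — there's a subtlety with the $-\infty$ case. Lemma zigzag assumes $M_\pi \neq -\infty$. Let me think about two cases.

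**Plan.**

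The plan is to split into two cases according to whether $M_\pi = -\infty$ or $M_\pi \in \mathbb{R}$.

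The first case, $M_\pi = -\infty$, is immediate. Since the polynomials have non-zero roots, all the $a_i$ (for $0 \le i \le m$) and $b_j$ (for $0 \le j \le n$) are in $\mathbb{R}$, i.e.\ are tropically non-zero. Hence there exists at least one $\pi' \in S^*$ with $M_{\pi'} \neq -\infty$ — for instance, any permutation whose induced path stays within the defined band of $M$. Concretely, one checks that $S^*$ contains a permutation selecting only finite entries (the "staircase" paths of the Sylvester matrix are exactly the terms of the genuine resultant, which is not the zero polynomial). Then $P \ge M_{\pi'} > -\infty = M_\pi$, so $P > M_\pi$.

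The second case, $M_\pi \in \mathbb{R}$, is the heart of the argument and uses Lemma zigzag as the engine. Since $\pi \in S \setminus S^*$, at least one of the two blocks of $\pi$ is not sorted in increasing order, so there is an adjacent descent $\nu_k > \nu_{k+1}$ (or $\mu_k > \mu_{k+1}$). I would run a sorting procedure: whenever an adjacent descent exists in either block, apply the corresponding swap from Lemma zigzag. Since $M_\pi \neq -\infty$ and each swap only rearranges which finite entries are chosen within a block (it does not move the path outside the band), the swapped permutation $\pi'$ still satisfies $M_{\pi'} \neq -\infty$; in fact $M_{\pi'} > M_\pi > -\infty$, so the hypothesis of Lemma zigzag is preserved along the whole chain. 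Each swap is an adjacent transposition that reduces the number of inversions within a block, so after finitely many steps (bounded by the total inversion count of the two blocks) both blocks become strictly increasing, yielding a permutation $\pi^* \in S^*$. Chaining the strict inequalities from Lemma zigzag gives
\[
M_{\pi^*} > \cdots > M_\pi,
\]
and since $\pi^* \in S \subseteq S$ contributes to the tropical sum, $P \ge M_{\pi^*} > M_\pi$, as required.

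**Main obstacle.** The main thing to be careful about is the $-\infty$ bookkeeping: Lemma zigzag is only stated for permutations with $M_\pi \neq -\infty$, so I must verify that finiteness is preserved under each swap in the sorting chain (it is, because the roots are tropically non-zero, so every entry $a_i, b_j$ appearing in the band is finite, and a swap within a block keeps the path inside the band). One should also confirm the termination/monotonicity of the sorting argument — that adjacent-descent swaps strictly decrease the inversion number of a block — which is the standard bubble-sort fact and presents no real difficulty. The genuinely routine part is the algebra inside Lemma zigzag, which we are permitted to invoke directly.
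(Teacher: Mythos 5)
Your proof is correct and follows essentially the same route as the paper: invoke Lemma~\ref{lem:zigzag} to produce a permutation with strictly larger $M$-value and conclude via $P\geq M_{\pi'}$, handling $M_\pi=-\infty$ separately. The only difference is that the paper stops after a single swap (since $P$ dominates \emph{every} $M_{\pi'}$, there is no need to sort all the way into $S^{\ast}$), so your bubble-sort chain, while valid, is more than is needed.
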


\begin{proof}
Let $\pi\in S\setminus S^{\ast}.$ Recall that $P=\prod_{ij}\left(  \alpha
_{i}+\beta_{j}\right)  .$ Since $\alpha_{1},\ldots,\alpha_{m},\beta_{1}%
,\ldots,\beta_{n}$ are tropically non-zero, we have $P\neq$ $-\infty$ Suppose
that $M_{\pi}=-\infty.$ Then we obviously have $P>M_{\pi}.$ Thus from now on,
assume that $M_{\pi}\neq-\infty.$ Note
\[
P=\sum_{\rho\in S}M_{\rho}=\sum_{\rho\in S\setminus\left\{  \pi\right\}
}M_{\rho}+M_{\pi}%
\]
From Lemma \ref{lem:zigzag}, we have $\pi^{\prime}\in S\setminus\left\{
\pi\right\}  $ such that $M_{\pi^{\prime}}>M_{\pi}.$ Hence $P>M_{\pi}.$
\end{proof}

\begin{lemma}
\label{lem:s*} $P=P^{*}.$
\end{lemma}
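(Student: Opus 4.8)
The plan is to show the two tropical sums $P=\sum_{\pi\in S}M_\pi$ and $P^*=\sum_{\pi\in S^*}M_\pi$ are equal. Since the tropical sum is the maximum, and $S^*\subseteq S$, the inequality $P\geq P^*$ is immediate: adding more terms to a maximum can only keep it the same or increase it. So the entire content is the reverse inequality $P\leq P^*$, i.e.\ that the maximum over the larger set $S$ is already attained on the smaller set $S^*$.

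First I would recall that $P=\max_{\pi\in S}M_\pi$, so there exists some permutation $\pi_0\in S$ achieving $M_{\pi_0}=P$. The goal is to argue that this maximizer can be taken to lie in $S^*$. The key tool is Lemma~\ref{lem:no_zigzag}, which states that for every $\pi\in S\setminus S^*$ we have $P>M_\pi$. Consequently, no permutation outside $S^*$ can attain the maximum value $P$; the maximizer $\pi_0$ must therefore satisfy $\pi_0\in S^*$. This gives $P=M_{\pi_0}\leq\max_{\pi\in S^*}M_\pi=P^*$, which combined with $P\geq P^*$ yields $P=P^*$.

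One technical point I would address is the degenerate case $P=-\infty$, which cannot actually arise here: as noted in the proof of Lemma~\ref{lem:no_zigzag}, since $\alpha_1,\ldots,\alpha_m,\beta_1,\ldots,\beta_n$ are all tropically non-zero we have $P\neq-\infty$. Thus the maximizer $\pi_0$ genuinely has $M_{\pi_0}\neq-\infty$, and the strict inequality from Lemma~\ref{lem:no_zigzag} applies without vacuity concerns. The only thing to be careful about is the logical direction: Lemma~\ref{lem:no_zigzag} is phrased as a strict inequality $P>M_\pi$ for $\pi\notin S^*$, which is exactly what forbids such $\pi$ from being a maximizer.

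I do not expect any real obstacle here, since the substantive work has already been done in Lemma~\ref{lem:no_zigzag} (which in turn rests on the zigzag exchange argument of Lemma~\ref{lem:zigzag}). The present lemma is essentially a one-line consequence: the maximum of $M_\pi$ over $S$ is attained, it cannot be attained outside $S^*$, hence it is attained inside $S^*$, so the two maxima coincide. The proof reduces to stating this observation cleanly.
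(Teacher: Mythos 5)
Your proof is correct and follows exactly the paper's route: the paper's own proof is a one-liner citing the definitions of $P$, $P^{*}$ and Lemma~\ref{lem:no_zigzag}, and your argument is simply a careful spelling-out of that same reasoning (trivial inequality $P\geq P^{*}$ plus the observation that Lemma~\ref{lem:no_zigzag} forbids any maximizer outside $S^{*}$).
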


\begin{proof}
Immediate from the definition of $P$, $P^{*}$. and Lemma \ref{lem:no_zigzag}.
\end{proof}

\begin{lemma}
\label{lem:unique}Let $\pi,\pi^{\prime}\in S^{\ast}.$ The following two are equivalent.

\begin{enumerate}
\item $\pi=\pi^{\prime}$

\item ${M}_{\pi}={M}_{\pi^{\prime}}$
\end{enumerate}
\end{lemma}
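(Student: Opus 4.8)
The implication $(1)\Rightarrow(2)$ is immediate, so the plan is to prove the contrapositive of $(2)\Rightarrow(1)$: if $\pi\neq\pi'$ then the terms $M_\pi$ and $M_{\pi'}$ differ. The right reading here is that $M_\pi$ denotes the \emph{monomial} $a_{\nu_1-1}\cdots a_{\nu_n-n}\,b_{\mu_1-1}\cdots b_{\mu_m-m}$ supplied by the Remark preceding the lemma, and that equality $M_\pi=M_{\pi'}$ means equality of the corresponding terms of $\mathfrak{R}$ in the indeterminates $\mathbf{a},\mathbf{b}$; this is exactly the reading demanded by Step~2 of the proof overview (``each permutation in $S^{\ast}$ provides a different term''), and it is the only sensible one, since two distinct $\pi,\pi'\in S^{\ast}$ can well produce the same tropical \emph{value} when $A$ and $B$ share a root, as already happens in Example~\ref{ex:result}.

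First I would note that, because $\mathbf{a}$ and $\mathbf{b}$ are disjoint indeterminates, equality of the two monomials forces equality of their $\mathbf{a}$-parts and of their $\mathbf{b}$-parts separately; hence it suffices to recover $\pi$ from the $\mathbf{a}$-part alone (the $\mathbf{b}$-part carries the complementary information and merely serves as a consistency check). The exponent of $\mathbf{a}_j$ in $M_\pi$ equals $\#\{k:\nu_k-k=j\}$, so the $\mathbf{a}$-part of $M_\pi$ determines, and is determined by, the multiset $\{\nu_1-1,\,\nu_2-2,\,\ldots,\,\nu_n-n\}$. The key observation is that for $\pi\in S^{\ast}$ the sequence $(\nu_k-k)_{k=1}^{n}$ is non-decreasing: from $\nu_1<\cdots<\nu_n$ and integrality one gets $\nu_{k+1}\ge\nu_k+1$, hence $\nu_{k+1}-(k+1)\ge\nu_k-k$. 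Consequently this sequence is precisely the sorted list of its own multiset, so from the multiset one uniquely recovers the ordered sequence $(\nu_k-k)_k$, and then $\nu_k=(\nu_k-k)+k$ recovers $\nu_1,\ldots,\nu_n$ without ambiguity. Since within $S^{\ast}$ the block $(\mu_1,\ldots,\mu_m)$ is just the increasing arrangement of the complement $\{1,\ldots,n+m\}\setminus\{\nu_1,\ldots,\nu_n\}$, recovering $\nu$ recovers all of $\pi$. Therefore $M_\pi=M_{\pi'}$ forces $\nu=\nu'$ and hence $\pi=\pi'$.

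The only genuine subtlety — the step I would be most careful about — is the non-decreasing property of $(\nu_k-k)_k$ together with the fact that it is exactly what allows a multiset to be turned back into an ordered sequence with no ambiguity. This is where the defining constraint $\nu_1<\cdots<\nu_n$ of $S^{\ast}$ (as opposed to an arbitrary permutation in $S$) enters essentially, and it explains why the corresponding statement fails on $S\setminus S^{\ast}$: for a general $\pi$ the map $\pi\mapsto(\text{multiset of }\nu_k-k)$ is many-to-one. Everything else is routine bookkeeping about monomial exponents, and I would keep the write-up to the two extractions (first $\nu$ from the $\mathbf{a}$-part, then $\pi$ from $\nu$) rather than belabor the symmetric $\mathbf{b}$-part argument.
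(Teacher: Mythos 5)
Your proof is correct and follows essentially the same route as the paper's: both arguments read $M_\pi$ as the formal monomial $a_{\nu_1-1}\cdots a_{\nu_n-n}b_{\mu_1-1}\cdots b_{\mu_m-m}$ and exploit that, for $\pi\in S^{\ast}$, the shifted index sequence $(\nu_k-k)_k$ (and likewise $(\mu_k-k)_k$) is non-decreasing, so the multiset of exponents recovers the ordered sequence and hence $\pi$. Your explicit justification of the monomial reading and the reduction to the $\mathbf{a}$-part plus the sorted complement are only cosmetic refinements of the paper's argument.
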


\begin{proof}
Let $\pi=\left(  \nu,\mu\right)  ,\ \pi^{\prime}=\left(  \nu^{\prime}%
,\mu^{\prime}\right)  \in S^{\ast}$. It is obvious that $1\Longrightarrow2.$
It remains to show $2\Longrightarrow1.$ Assume ${M}_{\pi}={M}_{\pi^{\prime}}.$
We will show that $\pi=\pi^{\prime}.$ Note%
\begin{align*}
{M}_{\pi}  &  ={a}_{\bar{\nu}_{1}}\cdots{a}_{\bar{\nu}_{n}} {b}_{\bar{\mu}%
_{1}}\cdots{b}_{\bar{\mu}_{m}}\\
{M}_{\pi^{\prime}}  &  = {a}_{\bar{\nu}_{1}^{\prime}}\cdots{a}_{\bar{\nu}%
_{n}^{\prime}} {b}_{\bar{\mu}_{1}^{\prime}}\cdots{b}_{\bar{\mu}_{m}^{\prime}}%
\end{align*}
where $\bar{\mu}_{i}=\mu_{i}-i,$ $\bar{\nu}_{i}=\nu_{i}-i$, $\bar{\mu}%
_{i}^{\prime}=\mu_{i}^{\prime}-i$ and $\bar{\nu}_{i}^{\prime}=\nu_{i}^{\prime
}-i.$ Since $\pi\in S^{\ast}\ $we have $0\leq\bar{\nu}_{1}\leq\cdots\leq
\bar{\nu}_{n}\leq m\ $and $0\leq\bar{\mu}_{1}\leq\ldots\leq\bar{\mu}_{m}\leq
n.$ The same with $\left(  \bar{\nu}^{\prime},\bar{\mu}^{\prime}\right)  .$
Since ${M}_{\pi}={M}_{\pi^{\prime}}$, we have $\bar{\nu}=\bar{\nu}^{\prime}$
and $\bar{\mu}=\bar{\mu}^{\prime}$, in turn, $\nu=\nu^{\prime}$ and $\mu
=\mu^{\prime}.$ Thus $\pi=\pi^{\prime}.$
\end{proof}

\begin{lemma}
\label{lem:RPPstar} $\mathfrak{R}(a,b)=P=P^{*}$.
\end{lemma}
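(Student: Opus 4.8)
The two equalities should be attacked separately, and half of the work is already in hand. The identity $P=P^{*}$ is precisely Lemma~\ref{lem:s*}, so the only genuine content is the relation $\mathfrak{R}(a,b)=P$. Moreover Lemma~\ref{lemma:DversusP} already supplies $\mathfrak{R}(a,b)\leq P$, so the whole task reduces to the reverse inequality $P\leq\mathfrak{R}(a,b)$. The plan is to exhibit a single monomial of the symbolic resultant $R$ whose tropical evaluation at $(a,b)$ equals $P$; since $\mathfrak{R}$ is by definition the tropical sum of the support of $R$, the presence of such a term immediately forces $\mathfrak{R}(a,b)\geq P$.

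First I would locate the maximizing permutation. Since $P=P^{*}=\sum_{\pi\in S^{*}}M_{\pi}$ and tropical addition is the maximum, there is some $\pi^{*}\in S^{*}$ with $M_{\pi^{*}}=P$; by Lemma~\ref{lem:unique} the values $M_{\pi}$ for $\pi\in S^{*}$ are pairwise distinct, so $\pi^{*}$ is in fact the \emph{unique} element of $S^{*}$ attaining the maximum. Let $\mathbf{m}^{*}$ denote the symbolic monomial in $\mathbf{a},\mathbf{b}$ attached to $\pi^{*}$, so that its tropical evaluation satisfies $\mathbf{m}^{*}(a,b)=M_{\pi^{*}}=P$. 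The goal is then to show that $\mathbf{m}^{*}$ survives in $R=\det(\text{Sylvester})$, i.e. occurs there with a nonzero coefficient.

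The key step is to rule out cancellation of $\mathbf{m}^{*}$ in the determinant expansion. The coefficient of $\mathbf{m}^{*}$ in $R$ is $\sum_{\pi}\operatorname{sgn}(\pi)$, summed over all $\pi\in S$ whose symbolic monomial equals $\mathbf{m}^{*}$. For any such $\pi$, evaluating this common monomial at $(a,b)$ gives $M_{\pi}=\mathbf{m}^{*}(a,b)=P$. If $\pi\in S\setminus S^{*}$, then Lemma~\ref{lem:no_zigzag} yields $M_{\pi}<P$, a contradiction; hence every contributing $\pi$ lies in $S^{*}$, and then $M_{\pi}=P=M_{\pi^{*}}$ forces $\pi=\pi^{*}$ by Lemma~\ref{lem:unique}. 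Thus $\pi^{*}$ is the unique permutation producing $\mathbf{m}^{*}$, so its coefficient in $R$ is $\operatorname{sgn}(\pi^{*})=\pm 1\neq 0$. Consequently $\mathbf{m}^{*}$ is one of the terms of $\mathfrak{R}$, whence $\mathfrak{R}(a,b)\geq\mathbf{m}^{*}(a,b)=P$. Combining this with Lemma~\ref{lemma:DversusP} gives $\mathfrak{R}(a,b)=P$, and together with Lemma~\ref{lem:s*} the full chain $\mathfrak{R}(a,b)=P=P^{*}$.

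I expect the main obstacle to be exactly this no-cancellation argument. The inequality $\mathfrak{R}(a,b)\leq P$ could a priori be strict, because passing from the permanent-like sum $P$ over all permutations to the genuine determinant $R$ may annihilate monomials through the signs $\operatorname{sgn}(\pi)$; the crux is therefore to certify that the particular top monomial $\mathbf{m}^{*}$ is immune to such cancellation. The argument above achieves this by a pigeonhole on the value $P$: the only permutation that can both realize the value $P$ and yield the monomial $\mathbf{m}^{*}$ is $\pi^{*}$ itself, so no second term is available to cancel it. The one point to handle carefully is that Lemma~\ref{lem:unique} is stated for the tropical \emph{values} $M_{\pi}$ at the fixed, structured point $(a,b)$ rather than for symbolic monomials, so the implication must be invoked in the order ``same symbolic monomial $\Rightarrow$ same value at $(a,b)$ $\Rightarrow$ same permutation.''
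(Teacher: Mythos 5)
Your overall strategy coincides with the paper's: the paper's own proof of this lemma is a one-line citation of Lemmas \ref{lem:s*}, \ref{lem:unique} and \ref{lemma:DversusP}, and the no-cancellation argument you spell out (a top monomial is produced by exactly one permutation, hence survives in the determinant of the Sylvester matrix with coefficient $\pm 1$, hence lies in the support of $R$ and so is a term of $\mathfrak{R}$) is precisely the content that the citation of Lemma \ref{lem:unique} is meant to carry. So the skeleton is right, and you have correctly identified that the only real work is the inequality $P\leq\mathfrak{R}(a,b)$.

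There is, however, one genuine error in how you deploy Lemma \ref{lem:unique}. You read it as a statement about the tropical \emph{values} $M_\pi\in\mathbb{T}$ and conclude that these values are pairwise distinct over $S^{\ast}$, so that the maximum is attained by a \emph{unique} $\pi^{\ast}$. That is false, and in fact contradicts the point of the whole paper: by Lemma \ref{lem:ncr<->nmp} the number of maximizing permutations is $2^{k}$, and already in Example \ref{ex:result} four distinct elements of $S^{\ast}$ all attain the value $\alpha_1^4\alpha_2\alpha_3$. What is true --- and what the proof of Lemma \ref{lem:unique} actually establishes, via equality of the sorted index tuples $\bar\nu,\bar\mu$ --- is that distinct permutations in $S^{\ast}$ yield distinct \emph{formal monomials} in $\mathbf{a},\mathbf{b}$. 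Your closing remark inverts this: the implication must run ``same symbolic monomial $\Rightarrow$ same permutation'' directly, not through ``same value at $(a,b)$ $\Rightarrow$ same permutation,'' which is simply unavailable. The fix is local: pick \emph{any} $\pi^{\ast}$ attaining $P$ (not ``the'' unique one); if $\pi\in S$ produces the same symbolic monomial, then $M_\pi=M_{\pi^{\ast}}=P$, so $\pi\in S^{\ast}$ by Lemma \ref{lem:no_zigzag}, and then the monomial reading of Lemma \ref{lem:unique} gives $\pi=\pi^{\ast}$. With that substitution your argument is correct and is the paper's intended proof.
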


\begin{proof}
By Lemmas \ref{lem:s*} and \ref{lem:unique} we get that $P\leq\mathfrak{R}%
(a,b)$, and by Lemma \ref{lemma:DversusP} we know that $\mathfrak{R}(a,b)\leq
P$. So, the statement holds.
\end{proof}

\begin{lemma}
\label{lem:pq} Let $\pi=\left(  \nu_{1},\ldots,\nu_{n},\mu_{1},\ldots,\mu
_{m}\right)  \in S^{\ast}.$ Let $p_{i}=n-\left(  \mu_{i}-i\right)  $ and
$q_{i}=m-\left(  \nu_{i}-i\right)  .$ Then we have%
\[
M_{\pi}=\alpha^{p}\beta^{q}%
\]

\end{lemma}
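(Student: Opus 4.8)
The plan is to compute $M_\pi$ explicitly by substituting the closed forms of the coefficients and then reading off the exponent of each root. Writing $\bar\nu_i=\nu_i-i$ and $\bar\mu_i=\mu_i-i$ as in the proof of Lemma~\ref{lem:unique}, the Remark gives
\[
M_\pi=a_{\bar\nu_1}\cdots a_{\bar\nu_n}\;b_{\bar\mu_1}\cdots b_{\bar\mu_m},
\]
and since $\pi\in S^\ast$ we have $0\le\bar\nu_i\le m$ and $0\le\bar\mu_i\le n$, so every factor is finite and $M_\pi\neq-\infty$. Substituting $a_k=\alpha_1\cdots\alpha_k$ and $b_k=\beta_1\cdots\beta_k$ and collecting, the exponent of $\alpha_j$ (for $1\le j\le m$) in $M_\pi$ is $\#\{i:\bar\nu_i\ge j\}$, and the exponent of $\beta_l$ (for $1\le l\le n$) is $\#\{i:\bar\mu_i\ge l\}$. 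It then remains to show these two counts equal $p_j=n-\bar\mu_j$ and $q_l=m-\bar\nu_l$ respectively.

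The crux is a combinatorial identity coming from the fact that $\{\nu_1,\ldots,\nu_n\}$ and $\{\mu_1,\ldots,\mu_m\}$ partition $\{1,\ldots,n+m\}$ into two strictly increasing runs. The key observation I would establish first is that $\bar\nu_i$ counts exactly the $\mu$'s lying below $\nu_i$, i.e.\ $\bar\nu_i=\#\{l:\mu_l<\nu_i\}$ (below $\nu_i$ there are $\nu_i-1$ integers, of which $i-1$ are $\nu$'s), and symmetrically $\bar\mu_j=\#\{i:\nu_i<\mu_j\}$. Because the $\mu$'s are increasing, the statement ``at least $j$ of the $\mu$'s lie below $\nu_i$'' is equivalent to the single comparison $\mu_j<\nu_i$; hence $\bar\nu_i\ge j\iff \nu_i>\mu_j$. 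Counting over $i$ and using disjointness ($\nu_i\neq\mu_j$) then yields
\[
\#\{i:\bar\nu_i\ge j\}=\#\{i:\nu_i>\mu_j\}=n-\#\{i:\nu_i<\mu_j\}=n-\bar\mu_j=p_j,
\]
and the symmetric computation gives $\#\{i:\bar\mu_i\ge l\}=m-\bar\nu_l=q_l$.

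Assembling the exponents, $M_\pi=\alpha_1^{p_1}\cdots\alpha_m^{p_m}\,\beta_1^{q_1}\cdots\beta_n^{q_n}=\alpha^p\beta^q$, as claimed. I expect the only genuine work to be the combinatorial identity of the second paragraph, i.e.\ the clean translation between ``number of $\mu$'s below $\nu_i$'' and the shifted index $\bar\nu_i$, together with the monotonicity step converting a cardinality threshold into the single comparison $\mu_j<\nu_i$; the substitution and exponent bookkeeping are routine. A minor point to watch is the empty-product convention ($a_0=b_0=0$ tropically), so that the counts remain correct at the extreme indices.
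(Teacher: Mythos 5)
Your proof is correct, and it takes a genuinely different route from the paper's. The paper fixes which block comes first ($\mu_1=1$ versus $\nu_1=1$), decomposes $\mu$ and $\nu$ into alternating consecutive blocks of lengths $s_1,t_1,s_2,t_2,\ldots$, writes $\bar\mu$ and $\bar\nu$ explicitly as repeated partial sums of these lengths, and then obtains the exponents by a telescoping product computation. You instead avoid both the case split and the block decomposition by a direct double-counting argument: you read off the exponent of $\alpha_j$ in $M_\pi=a_{\bar\nu_1}\cdots a_{\bar\nu_n}b_{\bar\mu_1}\cdots b_{\bar\mu_m}$ as $\#\{i:\bar\nu_i\geq j\}$, interpret $\bar\nu_i$ as the number of $\mu$'s below $\nu_i$ (which is exactly right: of the $\nu_i-1$ integers below $\nu_i$, precisely $i-1$ are $\nu$'s), use monotonicity of $\mu$ to convert the threshold condition $\bar\nu_i\geq j$ into the single comparison $\mu_j<\nu_i$, and then complement to get $n-\bar\mu_j=p_j$; the $\beta$ exponents follow symmetrically. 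This is essentially the conjugate-partition identity, and it yields a shorter, case-free argument; what the paper's block decomposition buys in exchange is an explicit description of the interleaving structure of $\pi\in S^\ast$ (the $\bar\mu,\bar\nu$ formulas in terms of $s_i,t_i$), which is not needed elsewhere. Your closing remark about the empty-product convention $a_0=b_0=0$ is the right boundary check, and the bounds $0\leq\bar\nu_i\leq m$, $0\leq\bar\mu_i\leq n$ guarantee all factors are tropically non-zero under the standing hypotheses.
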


\begin{example}
\label{ex:pq} Let us $m=3$ and $n=2.$ Let $\pi=(1,4,\ 2,3,5)\in S^{\ast}.$
Then%
\begin{align*}
M_{\pi}  &  =a_{1-1}\ a_{4-2}\ \ b_{2-1}\ b_{3-2}\ b_{5-3}\\
&  =a_{0}a_{2}b_{1}b_{1}b_{2}\\
&  =(0 )(\alpha_{1}\alpha_{2})(\beta_{1})(\beta_{1})(\beta_{1}\beta_{2})\\
&  =\alpha_{1}^{1}\alpha_{2}^{1}\alpha_{3}^{0}\beta_{1}^{3}\beta_{2}^{1}\\
\alpha^{p}\beta^{q}  &  =\alpha_{1}^{2-\left(  2-1\right)  }\alpha
_{2}^{2-\left(  3-2\right)  }\alpha_{3}^{2-\left(  5-3\right)  }\beta
_{1}^{3-\left(  1-1\right)  }\beta_{2}^{3-\left(  4-2\right)  }\\
&  =\alpha_{1}^{1}\alpha_{2}^{1}\alpha_{3}^{0}\beta_{1}^{3}\beta_{2}^{1}%
\end{align*}
Hence $M_{\pi}=\alpha^{p}\beta^{q},$ verifying the lemma on the particular
$\pi.$
\end{example}

\begin{proof}
[Proof of Lemma \ref{lem:pq}]There are two cases: $\mu_{1}=1$ or $\nu_{1}=1.$
We will show a proof of the lemma only for the case $\mu_{1}=1$. The proof for
the case $\nu_{1}=1$ is essentially the same. We will divide the proof into
two steps.

\begin{enumerate}
\item Let $s_{1},s_{2},\ldots$ be the lengths of the consecutive blocks in
$\mu$. Likewise let $t_{1},t_{2},\ldots$ be the lengths of the consecutive
blocks in $\nu.$ Then%
\begin{align*}
\mu &  =\left(  1,\ldots,s_{1},\ \ \ s_{1}+t_{1}+1,\ldots,s_{1}+t_{1}%
+s_{2},\ \ \ s_{1}+t_{1}+s_{2}+t_{2}+1,\ldots,s_{1}+t_{1}+s_{2}+t_{2}%
+s_{3},\ \ \ \ldots\right) \\
\nu &  =\left(  s_{1}+1,\ldots,s_{1}+t_{1},\ \ \ s_{1}+t_{1}+s_{2}%
+1,\ldots,s_{1}+t_{1}+s_{2}+t_{2},\ \ \ \ldots\right)
\end{align*}
Let $\overline{\mu}_{j}=\mu_{j}-j$ and $\overline{\nu}_{i}=\nu_{i}-i$. Then
\begin{align}
\bar{\mu}  &  =(\underbrace{0,\ldots,0}_{s_{1}},\ \ \ \underbrace{t_{1}%
,\ldots,t_{1}}_{s_{2}},\ \ \ \underbrace{t_{1}+t_{2},\ldots,t_{1}+t_{2}%
}_{s_{3}},\ \ \ \ldots)\label{st1}\\
\bar{\nu}  &  =(\underbrace{s_{1},\ldots,s_{1}}_{t_{1}}%
,\ \ \ \underbrace{s_{1}+s_{2},\ldots,s_{1}+s_{2}}_{t_{2}},\ \ \ \ldots
)\nonumber
\end{align}

\item Note%
\begin{align*}
M_{\pi}  &  =a_{\overline{\nu}_{1}}a_{\overline{\nu}_{2}}\cdots b_{\overline
{\mu}_{1}}b_{\overline{\mu}_{2}}\cdots\\
&  =a_{s_{1}}^{t_{1}}a_{s_{1}+s_{2}}^{t_{2}}\cdots b_{t_{1}}^{s_{2}}%
b_{t_{1}+t_{2}}^{s_{3}}\cdots\ \ \ \ \ \text{from (\ref{st1}) and the fact
that }b_{0}=1\text{{}}\\
&  =\prod_{k=1}^{s_{1}}\alpha_{k}^{t_{1}}\prod_{k=1}^{s_{1}+s_{2}}\alpha
_{k}^{t_{2}}\cdots\prod_{k=1}^{t_{1}}\beta_{k}^{s_{2}}\prod_{k=1}^{t_{1}%
+t_{2}}\beta_{k}^{s_{3}}\cdots\\
&  =\prod_{k=1}^{s_{1}}\alpha_{k}^{t_{1}+t_{2}+\cdots}\prod_{k=s_{1}+1}%
^{s_{1}+s_{2}}\alpha_{k}^{t_{2}+t_{3}+\cdots}\cdots\prod_{k=1}^{t_{1}}%
\beta_{k}^{s_{2}+s_{3}+\cdots}\prod_{k=t_{1}+1}^{t_{1}+t_{2}}\beta_{k}%
^{s_{3}+s_{4}+\cdots}\cdots\\
&  =\prod_{k=1}^{s_{1}}\alpha_{k}^{n-0}\prod_{k=s_{1}+1}^{s_{1}+s_{2}}%
\alpha_{k}^{n-t_{1}}\cdots\prod_{k=1}^{t_{1}}\beta_{k}^{m-s_{1}}\prod
_{k=t_{1}+1}^{t_{1}+t_{2}}\beta_{k}^{m-\left(  s_{1}+s_{2}\right)  }\cdots\\
&  =\prod_{k=1}^{s_{1}}\alpha_{k}^{n-\bar{\mu}_{k}}\prod_{k=s_{1}+1}%
^{s_{1}+s_{2}}\alpha_{k}^{n-\bar{\mu}_{k}}\cdots\prod_{k=1}^{t_{1}}\beta
_{k}^{m-\bar{\nu}_{k}}\prod_{k=t_{1}+1}^{t_{1}+t_{2}}\beta_{k}^{m-\bar{\nu
}_{k}}\cdots\\
&  =\prod_{k=1}^{m}\alpha_{k}^{n-\bar{\mu}_{k}}\prod_{k=1}^{n}\beta
_{k}^{m-\bar{\nu}_{k}}\\
&  =\prod_{k=1}^{m}\alpha_{k}^{p_{k}}\prod_{k=1}^{n}\beta_{k}^{q_{k}}\\
&  =\alpha^{p}\beta^{q}%
\end{align*}

\end{enumerate}
\end{proof}

\begin{notation}
\label{not:gamma} Let $\pi\in S$ and $L$ be a list of length $n+m$. Then
$\pi\left(  L\right)  $ is the list obtained from $L$ by permuting the element
according to $\pi$, that is, by moving the $i$-th element of $L$ to the
$\pi_{i}$-th position.
\end{notation}

\begin{example}
Let $m=3$ and $n=2.$ Let $\pi=(1,4,2,3,5)\in S^{\ast}$. Then%
\[
\pi\left(  \beta_{1},\beta_{2},\alpha_{1},\alpha_{2},\alpha_{3}\right)
=\left(  \beta_{1},\alpha_{1},\alpha_{2},\beta_{2},\alpha_{3}\right)
\]

\end{example}

\begin{lemma}
\label{lem:sort}Let $\pi\in S$. The followings are equivalent

\begin{enumerate}
\item The elements of $\pi\left(  \beta,\alpha\right)  $ are non-increasing

\item $P=M_{\pi}.$
\end{enumerate}
\end{lemma}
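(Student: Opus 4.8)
The plan is to reduce the statement to the index set $S^{\ast}$ and then re-express $M_{\pi}$ as a sum over $\alpha$--$\beta$ pairs, which makes the maximum-value characterization transparent. Throughout I work in ordinary arithmetic, recalling that the tropical product is ordinary addition and the tropical sum is $\max$.

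First I would dispose of the permutations outside $S^{\ast}$. If $\pi\in S\setminus S^{\ast}$, then $P>M_{\pi}$ by Lemma~\ref{lem:no_zigzag}, so condition~2 fails; and condition~1 fails as well, because if $\pi(\beta,\alpha)$ were non-increasing then, since $\beta_{1}>\cdots>\beta_{n}$ and $\alpha_{1}>\cdots>\alpha_{m}$ are \emph{strictly} decreasing, the $\beta$'s would have to occupy strictly increasing positions $\nu_{1}<\cdots<\nu_{n}$ and likewise the $\alpha$'s positions $\mu_{1}<\cdots<\mu_{m}$, forcing $\pi\in S^{\ast}$. Hence both conditions are false for $\pi\notin S^{\ast}$ and the equivalence holds trivially there, so I may assume $\pi\in S^{\ast}$ for the rest.

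Next comes the heart of the argument: a pair decomposition of $M_{\pi}$. By Notation~\ref{not:gamma}, in the list $\pi(\beta,\alpha)$ the entry $\beta_{j}$ sits at position $\nu_{j}$ and $\alpha_{i}$ at position $\mu_{i}$. A count of positions shows that $\mu_{i}-i$ is exactly the number of $\beta$'s lying before $\alpha_{i}$ in this list, so by Lemma~\ref{lem:pq} the exponent $p_{i}=n-(\mu_{i}-i)$ equals the number of $\beta$'s lying \emph{after} $\alpha_{i}$; symmetrically $q_{j}$ counts the $\alpha$'s after $\beta_{j}$. Writing $M_{\pi}=\sum_{i}p_{i}\alpha_{i}+\sum_{j}q_{j}\beta_{j}$ and regrouping by pairs yields $M_{\pi}=\sum_{i,j}c_{ij}$, where $c_{ij}=\alpha_{i}$ if $\alpha_{i}$ precedes $\beta_{j}$ and $c_{ij}=\beta_{j}$ otherwise (the two cases partition all pairs, since $\alpha_{i}$ and $\beta_{j}$ occupy distinct positions). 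In particular $c_{ij}\le\max(\alpha_{i},\beta_{j})$, whence $M_{\pi}\le\sum_{i,j}\max(\alpha_{i},\beta_{j})$. Now the ordinary merge of the two decreasing sequences produces a non-increasing $\pi_{0}\in S^{\ast}$ (each sequence keeps its internal order, so the $\beta$'s and the $\alpha$'s land at increasing positions) for which every pair has its larger element first, giving $M_{\pi_{0}}=\sum_{i,j}\max(\alpha_{i},\beta_{j})$. Combined with the bound, this identifies $\sum_{i,j}\max(\alpha_{i},\beta_{j})$ as the maximal value of $M_{\pi'}$ over $S^{\ast}$, which is $P^{\ast}=P$ by Lemmas~\ref{lem:s*} and~\ref{lem:RPPstar}. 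Therefore $P=M_{\pi}$ iff $c_{ij}=\max(\alpha_{i},\beta_{j})$ for every pair, i.e.\ iff the larger of each unequal $\alpha$--$\beta$ pair precedes the smaller in $\pi(\beta,\alpha)$; and this is equivalent to $\pi(\beta,\alpha)$ being non-increasing, since a non-increasing list places the larger of any two unequal entries first, while conversely an adjacent strict ascent in $\pi(\beta,\alpha)$ could only occur between an $\alpha$ and a $\beta$ (same-type entries are already decreasing because $\pi\in S^{\ast}$), which the pair condition forbids.

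The routine-but-delicate point I would write out most carefully is the bookkeeping in the pair decomposition: verifying that $\mu_{i}-i$ counts precisely the $\beta$'s preceding $\alpha_{i}$, and correctly folding the two single-index sums $\sum_{i}p_{i}\alpha_{i}$ and $\sum_{j}q_{j}\beta_{j}$ into the single double sum $\sum_{i,j}c_{ij}$. Ties among the roots (the common roots) deserve a brief remark but cause no trouble: an equal pair $\alpha_{i}=\beta_{j}$ contributes the same value in either order and imposes no constraint, exactly matching the fact that the non-increasing condition is insensitive to the ordering of equal entries.
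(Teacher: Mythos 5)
Your proof is correct, and for the substantive case $\pi\in S^{\ast}$ it takes a genuinely different route from the paper. The paper proves the forward direction by comparing exponent vectors of two permutations via Lemma~\ref{lem:pq} and a rearrangement-type inequality $\sum_j \lambda_j\gamma_j \ge \sum_j j\gamma_j$ for non-increasing $\gamma$, and proves the converse by locating an adjacent ascent $\gamma_k<\gamma_{k+1}$, showing it must mix an $\alpha$ with a $\beta$, and exhibiting an explicit transposition $\pi'\in S^{\ast}$ with $M_{\pi'}>M_{\pi}$. You instead decompose $M_{\pi}=\sum_{i,j}c_{ij}$ over $\alpha$--$\beta$ pairs, where $c_{ij}$ is whichever of $\alpha_i,\beta_j$ occurs first in $\pi(\beta,\alpha)$; the bookkeeping ($\mu_i-i$ counts the $\beta$'s preceding $\alpha_i$, so $p_i$ counts those following it, and each pair is charged exactly once) is sound, and it yields both directions at once from the single inequality $c_{ij}\le\max(\alpha_i,\beta_j)$ together with the equality case. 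A side benefit is that your argument actually \emph{proves} the identity $P=\sum_{i,j}\max(\alpha_i,\beta_j)$ (tropically $P=\prod_{ij}(\alpha_i+\beta_j)$), which the paper asserts without proof inside Lemma~\ref{lem:no_zigzag}; note only that you should cite just Lemma~\ref{lem:s*} for $P=P^{\ast}$ rather than Lemma~\ref{lem:RPPstar}, which is not needed and appears later in the paper's logical order. Your treatment of $\pi\in S\setminus S^{\ast}$ and of ties among common roots matches the paper's and is fine.
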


\begin{example}
\label{ex:sort} We illustrate the lemma using Example \ref{ex:result}, that
is, $m=3$ and $n=2$ and
\[%
\begin{array}
[c]{c}%
\alpha_{1}\\
\parallel\\
\beta_{1}%
\end{array}
>\alpha_{2}>%
\begin{array}
[c]{c}%
\alpha_{3}\\
\parallel\\
\beta_{2}%
\end{array}
\neq-\infty
\]
For $\pi\in S\backslash S^{\ast},$ it is easy to check that both $1$ and $2$
are false and thus the lemma is verified. For $\pi\in S^{\ast}$, Table
\ref{table-3} verifies the lemma.
\begin{table}[h]
\begin{center}%
\begin{tabular}
[c]{|c||c|c|c||c|c|c|}\hline
$\pi\in S^{\ast}$ & $\gamma$ & $\gamma\text{ simplified}$ & $\gamma_{1}%
\geq\cdots\geq\gamma_{5}$ & $M_{\pi}$ & $M_{\pi} \text{simplified}$ &
$P=M_{\pi}$\\\hline
$(1,2,\ \ 3,4,5)$ & $\left(  \beta_{1},\beta_{2},\alpha_{1},\alpha_{2}%
,\alpha_{3}\right)  $ & $\left(  \alpha_{1},\alpha_{3},\alpha_{1},\alpha
_{2},\alpha_{3}\right)  $ & false & $\beta_{1}^{3}\beta_{2}^{3}$ & $\alpha
_{1}^{3}\alpha_{3}^{3}$ & false\\\hline
$(1,3,\ \ 2,4,5)$ & $\left(  \beta_{1},\alpha_{1},\beta_{2},\alpha_{2}%
,\alpha_{3}\right)  $ & $\left(  \alpha_{1},\alpha_{1},\alpha_{3},\alpha
_{2},\alpha_{3}\right)  $ & false & $\alpha_{1}\beta_{1}^{3}\beta_{2}^{2}$ &
$\alpha_{1}^{4}\alpha_{3}^{2}$ & false\\\hline
$\ (1,4,\ \ 2,3,5)$ & $\left(  \beta_{1},\alpha_{1},\alpha_{2},\beta
_{2},\alpha_{3}\right)  $ & $\left(  \alpha_{1},\alpha_{1},\alpha_{2}%
,\alpha_{3},\alpha_{3}\right)  $ & {true} & $\alpha_{1}\alpha_{2}\beta_{1}%
^{3}\beta_{2}$ & $\alpha_{1}^{4}\alpha_{2}\alpha_{3}$ & {true}\\\hline
$\ (1,5,\ \ 2,3,4)$ & $\left(  \beta_{1},\alpha_{1},\alpha_{2},\alpha
_{3},\beta_{2}\right)  $ & $\left(  \alpha_{1},\alpha_{1},\alpha_{2}%
,\alpha_{3},\alpha_{3}\right)  $ & {true} & $\alpha_{1}\alpha_{2}\alpha
_{3}\beta_{1}^{3}$ & $\alpha_{1}^{4}\alpha_{2}\alpha_{3}$ & {true}\\\hline
$(2,3,\ \ 1,4,5)$ & $\left(  \alpha_{1},\beta_{1},\beta_{2}.\alpha_{2}%
,\alpha_{3}\right)  $ & $\left(  \alpha_{1},\alpha_{1},\alpha_{3}.\alpha
_{2},\alpha_{3}\right)  $ & \multicolumn{1}{||c||}{false} & $\alpha_{1}%
^{2}\beta_{1}^{2}\beta_{2}^{2}$ & $\alpha_{1}^{4}\alpha_{3}^{2}$ &
false\\\hline
$(2,4,\ \ 1,3,5)$ & $\left(  \alpha_{1},\beta_{1},\alpha_{2},\beta_{2}%
,\alpha_{3}\right)  $ & $\left(  \alpha_{1},\alpha_{1},\alpha_{2},\alpha
_{3},\alpha_{3}\right)  $ & {true} & $\alpha_{1}^{2}\alpha_{2}\beta_{1}%
^{2}\beta_{2}$ & $\alpha_{1}^{4}\alpha_{2}\alpha_{3}$ & {true}\\\hline
$\ (2,5,\ \ 1,3,4)$ & $\left(  \alpha_{1},\beta_{1},\alpha_{2},\alpha
_{3},\beta_{2}\right)  $ & $\left(  \alpha_{1},\alpha_{1},\alpha_{2}%
,\alpha_{3},\alpha_{3}\right)  $ & {true} & $\alpha_{1}^{2}\alpha_{2}\beta
_{1}^{2}\alpha_{3}$ & $\alpha_{1}^{4}\alpha_{2}\alpha_{3}$ & {true}\\\hline
$(3,4,\ \ 1,2,5)$ & $\left(  \alpha_{1},\alpha_{2},\beta_{1},\beta_{2}%
,\alpha_{3}\right)  $ & $\left(  \alpha_{1},\alpha_{2},\alpha_{1},\alpha
_{3},\alpha_{3}\right)  $ & false & $\alpha_{1}^{2}\alpha_{2}^{2}\beta
_{1}\beta_{2}$ & $\alpha_{1}^{3}\alpha_{2}^{2}\alpha_{3}$ & false\\\hline
$(3,5,\ \ 1,2,4)$ & $\left(  \alpha_{1},\alpha_{2},\beta_{1},\alpha_{3}%
.\beta_{2}\right)  $ & $\left(  \alpha_{1},\alpha_{2},\alpha_{1},\alpha
_{3}.\alpha_{3}\right)  $ & false & $\alpha_{1}^{2}\alpha_{2}^{2}\beta
_{1}\alpha_{3}$ & $\alpha_{1}^{3}\alpha_{2}^{2}\alpha_{3}$ & false\\\hline
$\ (4,5,\ \ 1,2,3)$ & $\left(  \alpha_{1},\alpha_{2},\alpha_{3},\beta
_{1},\beta_{2}\right)  $ & $\left(  \alpha_{1},\alpha_{2},\alpha_{3}%
,\alpha_{1},\alpha_{3}\right)  $ & false & $\alpha_{1}^{2}\alpha_{2}^{2}%
\alpha_{3}^{2}$ & $\alpha_{1}^{2}\alpha_{2}^{2}\alpha_{3}^{2}$ & false\\\hline
\end{tabular}
\end{center}
\caption{Verification of Lemma \ref{lem:sort} in Example \ref{ex:sort}, where
$\gamma=\pi(\beta,\alpha)$}%
\label{table-3}%
\end{table}
In the table, $\gamma$ stands for $\pi(\beta,\alpha)$. In the 3rd and the 6th
columns, we simplified the previous columns using the fact that $\alpha
_{1}=\beta_{1}\ $and $\alpha_{3}=\beta_{2},$ for the sake of easier checks in
the next columns.
\end{example}

\begin{proof}
[Proof of Lemma \ref{lem:sort}]Let $\gamma=\pi\left(  \beta,\alpha\right)  $.
We divide the proof into two cases: $\pi\in S\setminus S^{\ast}$ and $\pi\in
S^{\ast}.$

\medskip

\noindent Case 1: $\pi\in S\setminus S^{\ast}.$ For some $i<j,$ we have
$\nu_{i}>\nu_{j}$ or $\mu_{i}>\mu_{j}.$ Recall that $\gamma_{\nu_{i}}%
=\beta_{i},$ $\gamma_{\nu_{j}}=\beta_{j},$ $\gamma_{\mu_{i}}=\alpha_{i}$
and~$\gamma_{\mu_{j}}=\alpha_{j}.$ Since $\beta_{i}>\beta_{j}$ and $\alpha
_{i}>\alpha_{j},$ we have $\gamma_{\nu_{i}}>\gamma_{\nu_{j}}$ or $\gamma
_{\mu_{i}}>\gamma_{\mu_{j}}.$ Thus the statement $\gamma_{1}\geq\gamma_{2}%
\geq\cdots\geq\gamma_{n+m}$ is false. From Lemma \ref{lem:no_zigzag}, the
statement $P=M_{\pi}$ is also false. Thus the lemma is vacuously true.

\medskip

\noindent Case 2: $\pi\in S^{\ast}.$ We prove each direction of implication
one at a time:

\begin{enumerate}
\item If $\gamma_{1}\geq\gamma_{2}\geq\cdots\geq\gamma_{n+m}$ then $P=M_{\pi
}.$

Assume that $\gamma_{1}\geq\gamma_{2}\geq\cdots\geq\gamma_{n+m}.$ We need to
show $P=M_{\pi}.$ By Lemma \ref{lem:s*}%
\[
P=\sum_{\pi^{\prime}\in S^{\ast}}M_{\pi^{\prime}}%
\]
Let $\pi^{\prime}=\left(  \nu_{1}^{\prime},\ldots,\nu_{n}^{\prime},\mu
_{1}^{\prime},\ldots,\mu_{m}^{\prime}\right)  \in S^{\ast}$ be such that
$\pi^{\prime}\neq\pi.$ It suffices to show that $M_{\pi}\geq M_{\pi^{\prime}%
}.$ If $M_{\pi^{\prime}}=-\infty$ then it is obvious true. Thus from now on,
assume that $M_{\pi^{\prime}}\neq-\infty.$ Note, by Lemma \ref{lem:pq},%
\begin{align*}
\frac{M_{\pi}}{M_{\pi^{\prime}}}  &  =\frac{\prod_{i=1}^{m}\alpha
_{i}^{n-\left(  \mu_{i}-i\right)  }\prod_{i=1}^{n}\beta_{i}^{m-\left(  \nu
_{i}-i\right)  }}{\prod_{i=1}^{m}\alpha_{i}^{n-\left(  \mu_{i}^{\prime
}-i\right)  }\prod_{i=1}^{n}\beta_{i}^{m-\left(  \nu_{i}^{\prime}-i\right)  }%
}\\
&  =\frac{\prod_{i=1}^{m}\alpha_{i}^{\mu_{i}\prime}\prod_{i=1}^{n}\beta
_{i}^{\nu_{i}^{\prime}}}{\prod_{i=1}^{m}\alpha_{i}^{\mu_{i}}\prod_{i=1}%
^{n}\beta_{i}^{\nu_{i}}}\\
&  =\frac{\prod_{i=1}^{m}\gamma_{\mu_{i}}^{\mu_{i}^{\prime}}\prod_{i=1}%
^{n}\gamma_{\nu_{i}}^{\nu_{i}^{\prime}}}{\prod_{i=1}^{m}\gamma_{\mu_{i}}%
^{\mu_{i}}\prod_{i=1}^{n}\gamma_{\nu_{i}}^{\nu_{i}}}\\
&  =\frac{\prod_{j=1}^{n+m}\gamma_{j}^{\lambda_{j}}}{\prod_{j=1}^{n+m}%
\gamma_{j}^{j}}\ \ \ \ \ \ \text{where }\lambda\in S\ \text{such that }%
\lambda_{\mu_{i}}:=\mu_{i}^{\prime}\ \text{and }\lambda_{\nu_{i}}:=\nu
_{i}^{\prime}\\
&  \geq0
\end{align*}
Hence $M_{\pi}\geq M_{\pi^{\prime}}.$

\item If $P=M_{\pi}$ then $\gamma_{1}\geq\gamma_{2}\geq\cdots\geq\gamma
_{n+m}.$

We will prove the contrapositive. Assume that it is false that $\gamma_{1}%
\geq\gamma_{2}\geq\cdots\geq\gamma_{n+m}.$ We need to show that $P>M_{\pi}$.
Let $k$ be such that $\gamma_{k}<\gamma_{k+1}.$ Let $i$ and $j$ such that
$k=\pi_{i}$ and $k+1=\pi_{j}$. We consider the following four potential cases.

\begin{enumerate}
\item $i\leq n$ and $j\leq n$

Note that $\pi$ has the following form
\[
\pi=\left(  \nu,\mu\right)  =(\,\,\underbrace{\ldots,k,k+1,\ldots}%
_{n}\ \underbrace{\ldots\vphantom{,}\ldots\newline\newline}_{m}\,\,)
\]
where $k$ appears at the $i$-th position and $k+1$ appears at the $j$-th
position (in fact, $j$ must be $i+1$, since $\pi\in S^{\ast}$) in the $\nu$
block. Thus $\gamma_{k}=\beta_{i}$ and $\gamma_{k+1}=\beta_{j}.$ Since
$\gamma_{k}<\gamma_{k+1}$, we should have $\beta_{i}<\beta_{j}$. However this
is not possible due to the global assumption $\beta_{1}>\beta_{2}>
\cdots>\beta_{n}.$ Thus this case cannot occur.

\item $i>n$ and $j>n$

This case cannot occur, due to the essentially same reason as above.

\item $i\leq n$ and $j>n$

We divide the proof into several steps.

\begin{enumerate}
\item Note that $\pi$ has the following form%
\[
\pi=\left(  \nu,\mu\right)  =(\,\,\underbrace{\ldots,k,\ldots}_{n}%
\ \underbrace{\ldots,k+1,\ldots}_{m}\,\,)
\]
where $k$ appears on the $i\,$-th position in the $\nu$ block and $k+1$
appears on the $\left(  j-n\right)  $-th position in the $\mu$ block.

\item Note $\gamma_{k}=\beta_{i}$ and $\gamma_{k+1}=\alpha_{j-n}.$ Since
$\gamma_{k}<\gamma_{k+1,}$ we have $\beta_{i}<\alpha_{j-n}.$

\item Let $\pi^{\prime}$ be obtained from $\pi$ by swapping $\pi_{i}$ and
$\pi_{j}.$ Then $\pi^{\prime}$ has the following form%
\[
\pi^{\prime}=\left(  \nu^{\prime},\mu^{\prime}\right)
=(\,\,\underbrace{\ldots,k+1,\ldots}_{n}\ \underbrace{\ldots,k,\ldots}%
_{m}\,\,)
\]
where $k+1$ appears on the $i\,$-th position in the $\nu^{\prime}$ block and
$k$ appears on the $\left(  j-n\right)  $-th position in the $\mu^{\prime}$ block.

\item We will show that $\pi^{\prime}\in S^{\ast}.$ Since $\pi\in S^{\ast},$
we have that $\nu$ and $\mu$ are strictly increasing. By inspecting the form
of $\pi$ shown above, we see that, in the $\nu$ block, everything to the left
of $k$ is less then $k$ and everything to the right of $k$ is greater than
$k+1$ and that, in the $\mu$ block, everything to the left of $k+1$ is less
than $k$ and everything to the right of $k+1$ is greater than $k+1.$ By
inspecting the form of $\pi^{\prime}$ shown above, we see that $\nu^{\prime}$
and $\mu^{\prime}$ are strictly increasing. Thus $\pi^{\prime}\in S^{\ast}.$

\item From Lemma \ref{lem:pq}, we have
\[
\frac{M_{\pi^{\prime}}}{M_{\pi}}=\frac{\cdots\alpha_{j-n}^{n-\left(  k-\left(
j-n\right)  \right)  }\ \ \cdots\ \ \ \ \cdots\beta_{i}^{m-\left(
k+1-i\right)  }\cdots}{\cdots\alpha_{j-n}^{n-\left(  k+1-\left(  j-n\right)
\right)  }\cdots\ \ \ \cdots\beta_{i}^{m-\left(  k-i\right)  }\ \ \ \ \cdots
}=\frac{\alpha_{j-n}}{\beta_{i}}>0
\]
Thus $M_{\pi^{\prime}}>M_{\pi}$. Thus $P>M_{\pi}.$
\end{enumerate}

\item $i>n$ and $j\leq n$

We can show that $P>M_{\pi}$, using the essentially same argument as above.
\end{enumerate}
\end{enumerate}
\end{proof}

\begin{notation}
Let $\Delta=\left\{  \pi\in S:P=M_{\pi}\right\}  ,$ and $\Theta=\left\{
\pi\in S^{*}:P^{*}=M_{\pi}\right\}  .$ Let $\delta=\#\Delta,$ i.e. the number
of \textquotedblleft maximum permutations\textquotedblright. Similarly, let
$\theta=\#\Theta$.
\end{notation}

\begin{lemma}
\label{lem:NP-NPstar} $\# E_{\mathfrak{R}}\left(  a,b\right)  =\delta=\theta$.
\end{lemma}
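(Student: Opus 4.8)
The plan is to prove the two equalities in turn: first the purely combinatorial identity $\delta=\theta$, and then the identity $\#E_{\mathfrak{R}}(a,b)=\theta$ by exhibiting an explicit bijection between the maximizing permutations in $S^\ast$ and the maximizing terms of $\mathfrak{R}$. Throughout I write $m_\pi$ for the monomial in the variables $\mathbf{a},\mathbf{b}$ attached to $\pi\in S$ (so that $m_\pi(a,b)=M_\pi$), and I use that $R=\det M=\sum_{\pi\in S}\operatorname{sgn}(\pi)\,m_\pi$, so that the terms of $\mathfrak{R}$ are exactly the monomials $m_\pi$ surviving in the support of $R$.

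For $\delta=\theta$ I would show the stronger statement $\Delta=\Theta$. By Lemma~\ref{lem:no_zigzag}, every $\pi\in S\setminus S^\ast$ satisfies $M_\pi<P$, so no such $\pi$ can lie in $\Delta$; hence $\Delta\subseteq S^\ast$. For $\pi\in S^\ast$, Lemma~\ref{lem:s*} gives $P=P^\ast$, so $\pi\in\Delta\iff M_\pi=P\iff M_\pi=P^\ast\iff\pi\in\Theta$. Therefore $\Delta=\Theta$ and in particular $\delta=\theta$.

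For $\#E_{\mathfrak{R}}(a,b)=\theta$ I would set up the map $\Phi\colon\Theta\to E_{\mathfrak{R}}(a,b)$, $\pi\mapsto m_\pi$, and prove it is a bijection. Surjectivity is the easy direction: given any term $t\in E_{\mathfrak{R}}(a,b)$, it lies in the support of $R$, so there is at least one $\pi\in S$ with $m_\pi=t$; evaluating gives $M_\pi=t(a,b)=\mathfrak{R}(a,b)=P$ by Lemma~\ref{lem:RPPstar}, and then Lemma~\ref{lem:no_zigzag} forces $\pi\in S^\ast$, whence $M_\pi=P=P^\ast$ shows $\pi\in\Theta$ with $\Phi(\pi)=t$. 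Injectivity of $\Phi$ is immediate from Lemma~\ref{lem:unique}.

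The hard part will be the well-definedness of $\Phi$, namely that for $\pi\in\Theta$ the monomial $m_\pi$ genuinely survives in the support of $R$ and is not annihilated by cancellation against some other permutation carrying the same monomial. I would resolve this by showing that $\pi$ is in fact the \emph{unique} preimage of $m_\pi$ in all of $S$: if $m_{\pi'}=m_\pi$ for some $\pi'\in S$, then $M_{\pi'}=m_{\pi'}(a,b)=m_\pi(a,b)=M_\pi=P$, so Lemma~\ref{lem:no_zigzag} forces $\pi'\in S^\ast$, and then Lemma~\ref{lem:unique} forces $\pi'=\pi$. Consequently the coefficient of $m_\pi$ in $\det M$ equals $\operatorname{sgn}(\pi)\neq0$, so $m_\pi\in\operatorname{supp}(R)$ is a term of $\mathfrak{R}$; combined with $m_\pi(a,b)=P=\mathfrak{R}(a,b)$ this places $\Phi(\pi)$ in $E_{\mathfrak{R}}(a,b)$. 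With $\Phi$ a bijection we conclude $\#E_{\mathfrak{R}}(a,b)=\#\Theta=\theta$, which together with $\delta=\theta$ finishes the proof.
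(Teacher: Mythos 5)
Your proposal is correct and follows essentially the same route as the paper: both establish $\Delta=\Theta$ from Lemmas \ref{lem:no_zigzag} and \ref{lem:s*}/\ref{lem:RPPstar}, and both identify $E_{\mathfrak{R}}(a,b)$ with $\Theta$ via Lemmas \ref{lem:RPPstar}, \ref{lem:no_zigzag} and \ref{lem:unique} (the paper phrases this as the sandwich $\Theta\subset E_{\mathfrak{R}}(a,b)\subset\Delta$ rather than as an explicit bijection). Your extra step checking that $m_\pi$ for $\pi\in\Theta$ has a unique preimage in all of $S$, so that its coefficient in $\det M$ is $\pm 1$ and it genuinely survives in the support of $R$, makes explicit a no-cancellation point that the paper's inclusion $\Theta\subset E_{\mathfrak{R}}(a,b)$ leaves implicit.
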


\begin{proof}
By Lemma~\ref{lem:RPPstar} we have that $P=P^{*}$. Thus, $\Theta
\subset\Delta$. Moreover, by Lemma \ref{lem:no_zigzag}, we get that
$\Delta\subset\Theta$. Therefore, $\Delta=\Theta$ and in turn $\delta=\theta$.
On the other hand, we have that $E_{\mathfrak{R}}(a,b)\subset\Delta$, and by
Lemma \ref{lem:unique} we have $\Theta\subset E_{\mathfrak{R}}(a,b)$. Thus
$\Theta\subset E_{\mathfrak{R}}(a,b)\subset\Delta$ in turn $\theta
\leq\#E_{\mathfrak{R}}(a,b) \leq\delta$. Now, the lemma follows directly.
\end{proof}

\begin{lemma}
\label{lem:ncr<->nmp}The following two are equivalent:

\begin{enumerate}
\item $A$ and $B$ have exactly $k$ common roots.

\item $\delta=2^{k}.$
\end{enumerate}
\end{lemma}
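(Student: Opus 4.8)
The plan is to turn the statement into a single exact combinatorial count. By definition $\delta=\#\Delta$ where $\Delta=\{\pi\in S:P=M_{\pi}\}$, and Lemma~\ref{lem:sort} identifies this set precisely: $\pi\in\Delta$ if and only if the list $\pi(\beta,\alpha)$ is non-increasing (recall Notation~\ref{not:gamma}). Writing $L=(\beta_{1},\ldots,\beta_{n},\alpha_{1},\ldots,\alpha_{m})$ for the concatenated list of all roots, I therefore want to establish
\[
\delta=\#\{\pi\in S:\pi(L)\text{ is non-increasing}\}=2^{r},
\]
where $r$ denotes the number of common roots of $A$ and $B$. The desired equivalence $1\Leftrightarrow2$ then follows immediately, since $2^{r}=2^{k}$ forces $r=k$.

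The key step is the standard combinatorial fact that, for any list $L$ of length $N$, the number of permutations $\pi$ for which $\pi(L)$ is non-increasing equals $\prod_{v}(c_{v}!)$, the product taken over the distinct values $v$ occurring in $L$, with $c_{v}$ the multiplicity of $v$ in $L$. I would prove this by a grouping argument: in any non-increasing rearrangement the positions receiving a fixed value $v$ must form a single contiguous block, and the blocks for distinct values are forced into order by the values themselves, so the only remaining freedom is the internal assignment within each block, contributing exactly $c_{v}!$ choices; multiplying over the blocks gives $\prod_{v}(c_{v}!)$. (Passing from $\pi$ to $\pi^{-1}$ exchanges the ``scatter'' description of Notation~\ref{not:gamma} with the usual ``sort'' description, so the count is unaffected.) Note that this count ranges over all of $S$, exactly as needed for $\delta$.

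It then remains to evaluate the multiplicities $c_{v}$ for $L=(\beta,\alpha)$. Because the roots are simple, the reduction to the monic case has already placed them in strict order $\alpha_{1}>\cdots>\alpha_{m}$ and $\beta_{1}>\cdots>\beta_{n}$; hence each value occurs at most once among the $\alpha_{i}$ and at most once among the $\beta_{j}$, so $c_{v}\in\{1,2\}$ for every $v$. Moreover $c_{v}=2$ holds precisely when $v$ equals some $\alpha_{i}$ and some $\beta_{j}$ simultaneously, i.e. exactly when $v$ is a common root of $A$ and $B$. Consequently the number of values with $c_{v}=2$ is exactly $r$, every other value contributing the trivial factor $1!=1$, and therefore $\prod_{v}(c_{v}!)=2^{r}$. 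Combined with the first paragraph this gives $\delta=2^{r}$, whence the equivalence.

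The main obstacle I anticipate is not any single computation but making the grouping argument for the counting fact fully airtight: one must be careful to count over all of $S$ (rather than only $S^{\ast}$) and to credit each equal-value block with its correct internal multiplicity. By contrast, the identification of the multiplicity-two values with the common roots is immediate from the strict orderings $\alpha_{1}>\cdots>\alpha_{m}$ and $\beta_{1}>\cdots>\beta_{n}$ guaranteed by simplicity, and the final passage from $\delta=2^{r}$ to the stated equivalence is trivial.
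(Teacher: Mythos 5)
Your proof is correct and takes essentially the same route as the paper: both reduce via Lemma~\ref{lem:sort} to counting the permutations $\pi$ for which $\pi(\beta,\alpha)$ is non-increasing, and both obtain $2^{r}$ because each common root contributes exactly a factor of $2$ (you package this as the general multiset count $\prod_{v}(c_{v}!)$, while the paper enumerates the binary choices directly). The final step, deducing the equivalence from $\delta=2^{r}$ and $2^{r}=2^{k}\Rightarrow r=k$, is also the paper's argument.
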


\begin{proof}
Let us prove that $1\Longrightarrow2.$ Let $A$ and $B$ be of degrees $m$ and
$n$ with exactly $k$ common roots, say $\alpha_{i_{1}}=\beta_{j_{1}}%
,\ldots,\alpha_{i_{k}}=\beta_{j_{k}}$ where the roots $\alpha$'s and $\beta$'s
are ordered as follows.%
\begin{equation}
\cdots>%
\begin{array}
[c]{c}%
\alpha_{i_{1}}\\
\parallel\\
\beta_{j_{1}}%
\end{array}
>\cdots>%
\begin{array}
[c]{c}%
\alpha_{i_{2}}\\
\parallel\\
\beta_{j_{2}}%
\end{array}
>\cdots\cdots>%
\begin{array}
[c]{c}%
\alpha_{i_{k}}\\
\parallel\\
\beta_{j_{k}}%
\end{array}
>\cdots\label{root_order}%
\end{equation}
where $\cdots$ represent strict orderings among the other (non-common) roots.
Note%
\begin{align*}
\delta &  =\#\left\{  \pi\in S:P=M_{\pi}\right\} \\
&  =\#\left\{  \pi\in S:\text{the elements in }\pi\left(  \beta,\alpha\right)
\ \text{are non-increasing}\right\}  \ \ \text{from Lemma \ref{lem:sort}}\\
&  =\#\left\{  \pi\in S:\pi\left(  \beta,\alpha\right)  =\left(  \cdots,%
\begin{array}
[c]{c}%
\alpha_{i_{1}},\beta_{j_{1}}\\
\beta_{j_{1}},\alpha_{i_{1}}%
\end{array}
,\cdots,%
\begin{array}
[c]{c}%
\alpha_{i_{2}},\beta_{j_{2}}\\
\beta_{j_{2}},\alpha_{i_{2}}%
\end{array}
,\cdots\cdots,\,%
\begin{array}
[c]{c}%
\alpha_{i_{k}},\beta_{j_{k}}\\
\beta_{j_{k}},\alpha_{i_{k}}%
\end{array}
,\cdots\right)  \right\}  \ \ \text{from\ (\ref{root_order})}\\
&  \text{(where }%
\begin{array}
[c]{c}%
\square\\
\triangle
\end{array}
\ \text{means \textquotedblleft either }\square\ \text{or }\triangle
\text{\textquotedblright)}\\
&  =2^{k}%
\end{align*}
Let us show that $2\Longrightarrow1.$ Assume that $\delta=2^{k}$. Let
$\lambda$ be the number of common roots of $A$ and $B$. Then, since
$1\Longrightarrow2$, we have $\delta=2^{\lambda}.$ Thus $2^{\lambda}=2^{k}$,
and hence $\lambda=k.$ Thus $A$ and $B$ have exactly $k$ common roots.
\end{proof}

\begin{proof}
[Proof of Main Result (Theorem~\ref{thm:main})] It is a direct consequence of
Lemmas \ref{lem:NP-NPstar} and \ref{lem:ncr<->nmp}.
\end{proof}

\section{Conclusion and Discussion}

\label{sec:conclude} The goal of this paper was to adapt the following well
known property of univariate resultant over $\mathbb{C}$ to the tropical
semifield: the number of common roots of the two polynomials is the same as
the order of the resultant at the tuple of the coefficients. We have shown
that the same property holds if we adapt the notion of order, and we restrict
the roots of the polynomials to be tropical non-zeros and simple.

In the following, we will briefly and informally discuss a few questions
naturally raised by the results given in this paper.

\begin{enumerate}
\item \emph{Conditions on the root}. Note that we treated only the case when
all the roots are tropical non-zeros and simple. We discuss what happens in the
other cases.

\begin{enumerate}
\item \emph{tropical zero} root: Suppose that the two polynomials $A$ and $B$
have  the tropical zero $(-\infty)$ as a root. The main result does not hold. The reason is as
follows: each term of the polynomial $\mathfrak{R}$ always contains, at least,
one of the indeterminates~$\mathbf{a}_{m}$ and~$\mathbf{b}_{n}$. On the other
hand, the fact that $-\infty$ is a common root of $A,B$ implies that
$a_{m}=b_{n}=-\infty.$ Thus, for every term $t$ in $\mathfrak{R},$ we have
$\mathfrak{R}(a,b)=t_{i}(a,b)=-\infty$. Therefore the order can be different
from (in fact \emph{not} related to) the number of common roots.
Hence, in order to cover tropical zero roots, one will need to come up with a different notion of order. We leave it as an open challenge.
\item \emph{multiple} root: Suppose that a polynomial has a multiple (not
simple) root. The main result does not hold. The reason is as follows: a
function with a multiple root admits infinitely many polynomial
representations. Furthermore the order of the resultant depends on which
representation is chosen. Therefore the order can be different from the number
of common roots. For example, consider the following three polynomials:
\begin{align*}
A  & =0\mathbf{x}+3  \\
B_{1}  & =0\mathbf{x}^{2}+2\mathbf{x}+6\\
B_{2}  & =0\mathbf{x}^{2}+3\mathbf{x}+6
\end{align*}
representing the following functions:
\begin{center}
\includegraphics[width=3cm]{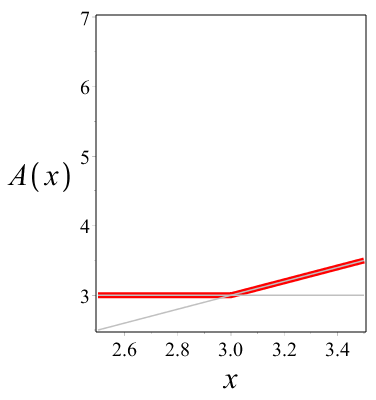}
\includegraphics[width=3cm]{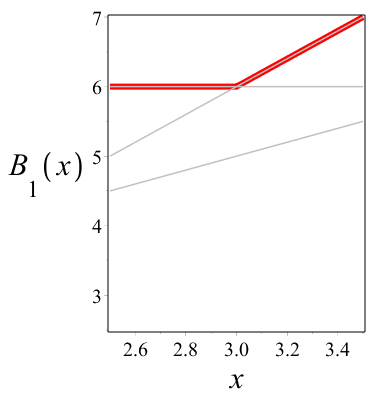}
\includegraphics[width=3cm]{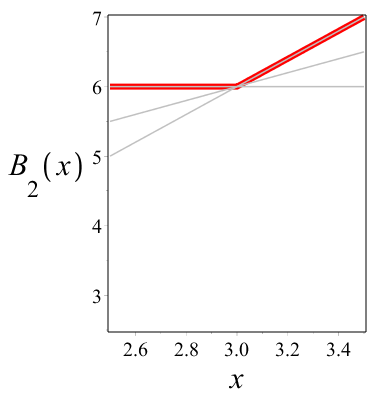}
\end{center}

\begin{itemize}
\item
It is obvious that $A$ has a simple root, namely $3$.
It is also obvious that $B_{1}$ and $B_{2}$ represent the same
function, with one double root, namely $3$.
\item
Thus the number of common roots of $A$ and $B_1$ is $1$. Likewise the
number of common roots of $A$ and $B_2$ is also $1$.
\item Direct computation show that
\[
\mathfrak{R}(\mathbf{a},\mathbf{b})  =
\mathbf{a}_{1}^{2}\mathbf{b}_0 +
\mathbf{a}_0\mathbf{a}_{1}\mathbf{b}_{1} +
\mathbf{a}_0^2\mathbf{b}_{2}
\]
Thus
\begin{eqnarray*}
\mathfrak{R}(((0,3),(0,2,6))) &=& \max\{6,5,6\} \\
\mathfrak{R}(((0,3),(0,3,6))) &=& \max\{6,6,6\}
\end{eqnarray*}
\item Hence
\[O_{\mathfrak{R}}((0,3),(0,2,6))=\log_{2}2=1\]
 but
\[O_{\mathfrak{R}}((0,3),(0,3,6))=\log_{2}3\neq 1\]
\end{itemize}
Hence, in order to cover multiple roots, one will need to come up with a different notion of order. One natural and potential   approach  might be to  look at  the variety (polyhedral fan complex) of the resultant and investigate the co-dimension of  the cone where the coefficients vector of two polynomials $A$ and $B$ belongs to. We
leave it as an open challenge.

\end{enumerate}

\item \emph{Rank deficiency}. Over $\mathbb{C}$, it is well known that the
number of the common roots is the same as the rank deficiency of the Sylvester
matrix. Thus, one wonders whether the relation can be adapted to the tropical
semifield. We divide the discussion into two cases.

\begin{enumerate}
\item \emph{All roots are tropical non-zeros and simple}: Through numerous
experiments on computer, we conjecture that the relation holds in this case if
the rank is taken as the tropical rank~\cite{DevSanSt05,IR2009}. It is
relatively easy to prove that the number of the common roots is bounded
\emph{below} by the rank deficiency (exploiting some of the proof techniques
developed in this paper). However, it seems challenging to prove/disprove that
the number of the common roots is bounded \emph{above} by the rank deficiency.
We leave it as open challenge.

\item \emph{The other cases}: The relation does not hold in general, as
illustrated by the   example
\[
A  =\mathbf{x}^{3}+2\mathbf{x}^{2}+2\mathbf{x}+6, \;\;\;\;\;\;
B  =\mathbf{x}^{2}-2\mathbf{x}+4
\]
It is easy to verify that the roots of $A$ and $B$ are respectively
$(2,2,2)$ and  $(2,2)$.
Thus, the number of common roots is $2$.
However, a direct computation shows that the tropical
rank deficiency of the Sylvester matrix is $1$. In \cite{DevSanSt05}, two
other different notions of ranks are considered, namely Kapranov and
Barvinok. Hence one wonders whether any of those might make the relation hold.
However, according to Theorem 1.4 in the paper,  the tropical rank is not greater
than the other two, and hence the tropical rank deficiency based on the other
two can never be greater than $1$. Thus, the relation does not hold for
Kapranov and Barvinok ranks either.

Hence, in order to cover  the  tropical zero roots or multiple roots, one will need to come up with a different notion of rank. We leave it as an open challenge.

\end{enumerate}

\end{enumerate}

\vspace*{0.5cm}

\noindent{\large \textbf{Acknowledgement.}} H. Hong (hong@ncsu.edu) is
partially supported by US NSF 1319632. J.R. Sendra (rafael.sendra@uah.es) is
partially supported by the Spanish Ministerio de Econom\'{\i}a y
Competitividad under the Project MTM2014-54141-P.

\bibliographystyle{plain}
\bibliography{paper}

\section{Appendix: number of common roots and resultants over $\mathbb{C}$ (by
Laurent Bus\'e)}

Recall the following property mentioned in the introduction: the order of the
point at the resultant is equal to the number of common complex roots of the
two polynomials over $\mathbb{C}$. This property is definitely part of the
folklore but we were not able to find it in the existing literature. In the
following, we communicate a simple proof kindly provided by Laurent Bus\'e
(laurent.buse@inria.fr).  The proof is presented in a bit more general context
of unique factorization domain. Furthermore, the number of common roots is
seen as the degree of gcd. \bigskip

Let $k$ be a unique factorization domain. Given two positive integers $m,n$,
consider the homogeneous polynomials
\begin{align*}
f(x,y)  & = a_{0}x^{m}+a_{1}x^{m-1}y+\cdots+a_{m}y^{m}\\
g(x,y)  & = b_{0}x^{n}+b_{1}x^{n-1}y+\cdots+b_{n}y^{n}%
\end{align*}
in the variables $x,y$ with coefficients in the ring $\mathbb{A}%
:=k[a_{0},\ldots,a_{m},b_{0},\ldots,b_{n}].$
The Sylvester resultant $R:=\operatorname{Res}(f,g)$ of $f(x,y)$ and $g(x,y)$
is a polynomial in $\mathbb{A}$.
Given a point
\[
s:=(p_{0},\ldots,p_{m},q_{0},\ldots,q_{n}) \in k^{m+n+2},
\]
the question is to determine the order of the resultant at $s$.

\begin{proposition}
The order of the resultant polynomial $R$ at the point $s$ is equal to the
degree of the gcd of the polynomials
\[
p(x,y)=\sum_{i=0}^{m} p_{i}x^{m-i}y^{i}, \ q(x,y)=\sum_{i=0}^{n} q_{i}%
x^{n-i}y^{i},
\]
unless $s=(0,\ldots,0)$, i.e.~$(p,q)=(0,0)$, in which case the order is equal
to $m+n$.
\end{proposition}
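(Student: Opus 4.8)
The plan is to compute $\operatorname{ord}_s R$ by restricting $R$ to a generic line through $s$ and then reading the order of the resulting univariate resultant off the corank of a Sylvester matrix. First I would dispose of the exceptional point: since $\operatorname{Res}$ is bihomogeneous of degrees $n$ and $m$ in the two blocks of coefficients, $R$ is homogeneous of total degree $m+n$, so every monomial of $R$ has degree $m+n$ and $\operatorname{ord}_0 R = m+n$; this settles $s=0$. Assume now $s\neq 0$. To reduce the several-variable order to a one-variable order, write $R$ in coordinates centred at $s$ as $R=R_d+R_{d+1}+\cdots$ with $R_d\neq 0$ homogeneous of degree $d=\operatorname{ord}_s R$. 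Then $R(s+tv)=R_d(v)\,t^d+\cdots$, so $\operatorname{ord}_{t=0}R(s+tv)=d$ whenever $R_d(v)\neq 0$, i.e. for generic $v=(u,w)$. Thus it suffices to evaluate $\operatorname{ord}_{t=0}\operatorname{Res}(p+tu,\,q+tw)$ for a generic direction, where $u,w$ are forms of degrees $m,n$.

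The heart of the matter is then linear algebra. I would identify $\operatorname{Res}(p+tu,q+tw)$ with $\det M(t)$, where $M(t)=M_0+tM_1$ is the Sylvester matrix and $M_0=\operatorname{Syl}(p,q)$ represents the map $(A,B)\mapsto Ap+Bq$ on pairs of forms of degrees $(n-1,m-1)$. A standard syzygy computation gives $\dim\ker M_0=\deg\gcd(p,q)=:\delta$: writing $h=\gcd(p,q)$, $p=hp_1$, $q=hq_1$ with $\gcd(p_1,q_1)=1$, the relation $Ap+Bq=0$ forces $Ap_1=-Bq_1$, hence $q_1\mid A$, so the kernel is exactly $\{(q_1C,\,-p_1C):\deg C=\delta-1\}$, of dimension $\delta$. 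Consequently $\operatorname{corank}M_0=\delta$, and the general bound for a pencil of matrices gives $\operatorname{ord}_{t=0}\det(M_0+tM_1)\geq\delta$, with equality precisely when the induced map $\overline{M_1}\colon\ker M_0\to\operatorname{coker}M_0$ is an isomorphism.

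The remaining and most delicate step is to verify this transversality for the \emph{constrained} perturbations actually available, namely $M_1(A,B)=Au+Bw$. On the kernel this reads $(q_1C,-p_1C)\mapsto C\,(q_1u-p_1w)$, and since $\operatorname{coker}M_0$ is the quotient of the forms of degree $m+n-1$ by $h\cdot(\text{forms of degree }m+n-1-\delta)$, the induced map is $C\mapsto[C\,e]$ with $e:=q_1u-p_1w$ of degree $m+n-\delta$. Using $\gcd(p_1,q_1)=1$, the expression $q_1u-p_1w$ sweeps out all forms of degree $m+n-\delta$ as $u,w$ vary (this degree exceeds the coprimality threshold $m+n-2\delta-1$), so for generic $u,w$ the form $e$ is generic and in particular $\gcd(e,h)=1$. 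Then $h\mid Ce$ forces $h\mid C$, impossible for $0\neq C$ of degree $\delta-1<\delta$; hence $C\mapsto[Ce]$ is injective, and both spaces having dimension $\delta$ it is an isomorphism. This yields $\operatorname{ord}_{t=0}\det M(t)=\delta$, and combining with the first paragraph (a single $v$ generic for both conditions exists) gives $\operatorname{ord}_s R=\delta=\deg\gcd(p,q)$.

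The main obstacle I anticipate is exactly this last transversality: one must check that the special Sylvester-type perturbations, rather than arbitrary matrix perturbations, are rich enough to realize the generic order $\delta$ — which is where the coprimality $\gcd(p_1,q_1)=1$ and the degree bookkeeping are essential. A secondary technical point is that, if $k$ is a unique factorization domain rather than an infinite field, the several appeals to genericity must be replaced either by explicit coprime choices of $u,w$ or by base-changing to a sufficiently large extension and checking that order and $\deg\gcd$ are preserved; I would handle this at the end rather than carry the generality through the main computation.
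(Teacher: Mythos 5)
Your argument is correct, but it follows a genuinely different route from the one in the paper. The paper's proof (due to Bus\'e) never touches the Sylvester matrix: it takes the deformation $(p+tf,\,q+tg)$ with \emph{indeterminate} coefficients in $f,g$ (so no genericity over the base ring is ever needed), and extracts the factor $t^{\delta}$ purely from multiplicativity and invariance properties of the resultant, via the identity $t^{\,n-\delta}\operatorname{Res}(\tilde q,g)\operatorname{Res}(p+tf,q+tg)=t^{\,n}\operatorname{Res}(\tilde q f-\tilde p g,\,q+tg)$ with $p=\tilde p h$, $q=\tilde q h$. That yields the valuation $\delta$ in a few lines and works verbatim over a UFD. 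Your proof instead restricts to a line, writes the restricted resultant as $\det(M_0+tM_1)$ for the Sylvester pencil, computes $\operatorname{corank}M_0=\delta$ from the classical syzygy description of $\ker\operatorname{Syl}(p,q)$, and then verifies the first-order transversality condition $\ker M_0\to\operatorname{coker}M_0$ using $e=q_1u-p_1w$ and $\gcd(e,h)=1$; all the steps check out (including the surjectivity of $(u,w)\mapsto q_1u-p_1w$ in degree $m+n-\delta$ and the identification of $\operatorname{coker}M_0$ with forms of degree $m+n-1$ modulo $h\cdot k[x,y]_{m+n-1-\delta}$). The price is the genericity bookkeeping you flag at the end (infinite field or base change, plus choosing one direction generic for both conditions); the payoff is that your route proves, as a byproduct, the classical identity $\deg\gcd(p,q)=\operatorname{corank}\operatorname{Syl}(p,q)$ and exhibits $\operatorname{ord}_sR$ as exactly that corank --- which is precisely the rank-deficiency statement whose tropical analogue the paper leaves open in its concluding discussion.
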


\begin{proof}
The order of the resultant $R$ at $s$ is nothing but the $t$-valuation of the
polynomial
\[
R(p_{0}+ta_{0},\ldots,p_{m}+ta_{m},q_{0}+tb_{0}+\ldots,q_{n}+tb_{n}%
)=\operatorname{Res}(p(x,y)+tf(x,y),q(x,y)+tg(x,y))\in\mathbb{A}[t].
\]
Denote by $h(x,y)$ the gcd of $p(x,y)$ and $q(x,y)$ and by $\delta$ its
degree; there exist two polynomials $\tilde{p}$ and $\tilde{q}$ such that
$p=\tilde{p}h$ and $q=\tilde{q}h$.

If $p=q=0$ then the claimed property is clear. If $q=0$ and $p\neq0$, then
\[
\operatorname{Res}(p+tf,q+tg)=\operatorname{Res}(p+tf,tg)=t^{m}%
\operatorname{Res}(p+tf,g).
\]
Since $\operatorname{Res}(p+tf,g)_{|t=0}=\operatorname{Res}(p,g)\neq0$, for
$p\neq0$ and $g$ is the generic homogeneous polynomial of degree $n$, the
claimed property is proved.

Now, assume that $q\neq0$. By applying some classical properties of the
resultant, we have that
\begin{multline*}
\operatorname{Res}(\tilde{q},tg)\operatorname{Res}(\tilde{p}h+tf,\tilde
{q}h+tg)= \operatorname{Res}(\tilde{q},\tilde{q}h+tg)\operatorname{Res}%
(\tilde{p}h+tf,\tilde{q}h+tg)\\
= \operatorname{Res}(\tilde{q}\tilde{p}h+t\tilde{q}f,\tilde{q}h+tg)
=\operatorname{Res}(t(\tilde{q}f-\tilde{p}g),\tilde{q}h+tg).
\end{multline*}
It follows that
\[
t^{n-\delta}\operatorname{Res}(\tilde{q},g)\operatorname{Res}(p+tf,q+tg)=
t^{n}\operatorname{Res}(\tilde{q}f-\tilde{p}g,q+tg).
\]
From here, the claimed property follows since $\operatorname{Res}(\tilde
{q},g)\neq0$ and $\operatorname{Res}(\tilde{q}f-\tilde{p}g,q)\neq0$, for
$\tilde{p}$ and $\tilde{q}$ are coprime polynomials.
\end{proof}

\end{document}